\documentclass[a4paper,12pt,draft]{article}
\usepackage{amsmath,amssymb}
\usepackage{amsthm}
\usepackage[dvipdfmx]{graphicx}
\usepackage{mathrsfs}
\usepackage[active]{srcltx}
\usepackage{amscd}
\usepackage{cases}
\usepackage[noadjust]{cite}
\usepackage[usenames]{color}
\usepackage{titlesec}
\usepackage{cancel}
\titleformat*{\section}{\large\bfseries}
\titleformat*{\subsection}{\normalsize\bfseries}
 \newtheorem{theorem}{Theorem}[section]
 \newtheorem{proposition}[theorem]{Proposition}
 
 \newtheorem{lemma}[theorem]{Lemma}
 \newtheorem{corollary}[theorem]{Corollary}
\theoremstyle{definition}
 \newtheorem{definition}[theorem]{Definition}
 
 \newtheorem{example}[theorem]{Example}
\numberwithin{equation}{section}

\newcommand{\R}{\boldsymbol{R}}

\newcommand{\A}{\mathcal{A}}

\newcommand{\OO}{\mathcal{O}}
\newcommand{\W}{\mathcal{W}}
\newcommand{\pmt}[1]{{\begin{pmatrix} #1  \end{pmatrix}}}

\newcommand{\e}{\boldsymbol{e}}
\newcommand{\inner}[2]{\left\langle{#1},{#2}\right\rangle}

\newcommand{\rank}{\operatorname{rank}}

\renewcommand{\phi}{\varphi}

\newcommand{\image}{\operatorname{image}}
\renewcommand{\Gamma}{\varGamma}
\newcommand{\ep}{\varepsilon}

\newcommand{\gr}{\operatorname{Gr_2(4)}}
\title{Geometry of pseudo-non-degenerate two-ruled hypersurfaces}
\author{Junzhen Li and Kentaro Saji}
\date{\today}
\begin{document}
\maketitle
\footnote[0]{2020 Mathematics Subject classification. Primary
57R45; Secondary 53A07, 53A25.}
\footnote[0]{Keywords and Phrases. Two-ruled hypersurface, striction curve}
\footnote[0]{
Partly supported by the
JSPS KAKENHI Grant Numbers 25K07001, 22KK0034.}

\begin{abstract}
	We investigate the singularities of
	two-ruled hypersurfaces in the Euclidean four-space.
	By considering the points that minimize the
	distance between adjacent rulings,
	we obtain a characterization the striction curve.
	We introduce the notion of pseudo-non-degenerate two-ruled hypersurfaces
	and examine their fundamental properties.
	We show that two-ruled hypersurfaces constructed from
	a curve equipped with a Frenet-type frame,
	via height functions, are pseudo-non-degenerate.
	Furthermore, we study
	properties of the original curve through the
	striction curves and the
	singularities of pseudo-non-degenerate two-ruled hypersurfaces
	constructed in this manner.

\end{abstract}

\section{Introduction}
In this paper, we study one-parameter families of
planes in the Euclidean four-space $\R^4$, called
two-ruled hypersurfaces.
Ruled surfaces in $\R^3$ are a classical subject in differential
geometry and have recently attracted attention not only within
differential geometry but also in 
several other fields. See, for example,
\cite{hhnuy,izta,izoh}
and
\cite{flpo,hnsuy,li,pxwd,ppr}.
Two-ruled hypersurfaces naturally generalize
ruled surfaces to higher-dimensional spaces, and they have been studied
by many authors \cite{2ruledhypersurface,md,sing2ruledhypersurface}.
In \cite{sing2ruledhypersurface}, a notion of
non-degeneracy for two-ruled hypersurfaces is introduced.
Under this condition,
the striction surface and the striction curve, 
which are an analogue of the striction curve of a ruled surface, 
are naturally defined, since
the singular points lie on that curve.
In this paper, by characterizing the striction curve
as the locus of points that
the minimize, in an infinitesimal sense, the
distance between adjacent rulings,
we show that the striction curve in the sense of \cite{sing2ruledhypersurface}
also has the same meaning.
We introduce another notion, pseudo-non-degeneracy, for 
two-ruled hypersurfaces,
and define the
striction surface and second striction curve
on pseudo-non-degenerate two-ruled hypersurfaces.
Given a curve in $\R^4$, we take a Frenet-type frame, 
and define
a height function with respect to one of the frame vectors.
The discriminant set of this height function
yields a pseudo-non-degenerate two-ruled hypersurface.
By studying the striction surfaces and singularities of these 
hypersurfaces, we obtain information about the geometry
of the original curve.

\section{Two-ruled hypersurfaces}
In this section, we introduce two-ruled hypersurfaces
and their basic properties.
For further details, see \cite{2ruledhypersurface,sing2ruledhypersurface}.
\subsection{Basic notions and definitions}\label{sec:basic}
Let $\gamma:(\R,0) \to (\R^4,0)$
and
$P:(\R,0) \to \gr$ be curve-germs, where $\gr$ is
a Grassmann manifold.
Taking a basis field $X,Y:(\R,0) \to \R^4$ of $P$,
the one-parameter family of hypersurfaces defined by
$$
f(t,s,r) = \gamma(t) + s X(t) + r Y(t)
$$
for $t\in(\R,0)$ and $s,r\in\R$
is called a \emph{two-ruled hypersurface-germ},
or simply a \emph{two-ruled hypersurface}.
Since $O=\{(t,s,r)\in\R^3\,|\,t=0\}$ is a closed set,
one can naturally consider map-germs along $O$,
and we denote them by 
$f:(\R^3,O)\to\R^4$ or we write
$f:(\R^3,(0,s,r))\to\R^4$
to explicitly denote the variable.
Because we work with germs at $0$, the expression 
``for all $t \in (\R,0)$'' means that 
there exists an open neighborhood $I \subset \R$ of $0$ 
such that the statement holds for all $t \in I$.
We call $\gamma$ the {\it base curve}, and the two vectors $X$ and $Y$ 
the {\it director curves}. 
The planes $P$ are called {\it rulings}.
This definition generalizes the notion of ruled surfaces in $\R^3$.
The properties of two-ruled hypersurfaces, 
including their generic singularities, 
have been studied in \cite{2ruledhypersurface,sing2ruledhypersurface}.
For any two-ruled hypersurface-germ 
$f=\gamma(t) + s X(t) + r Y(t):(\R^3,(0,s,r))\to\R^4$, 
we can choose director curves $X$ and $Y$ such that $|X|=|Y|=1$,
$\langle X,Y\rangle=0$, and
$\langle X,Y'\rangle=\langle X',Y\rangle=0$ for all $t\in(\R,0)$,
namely, the following holds.
\begin{lemma}{\rm (\cite[Lemma 3.2]{sing2ruledhypersurface})}
	For any germs\/ $X,Y:(\R,0)\to(\R^4,0)$ that are linearly independent
	for all\/ $t\in(\R,0)$,
	there exists\/ $\tilde{X},\tilde{Y}:(\R,0)\to(\R^4,0)$ 
	satisfying\/
	$\langle X,Y\rangle_{\R}=\langle \tilde{X},\tilde{Y}\rangle_{\R}$,
	$|\tilde{X}|=|\tilde{Y}|=1$,
	$\langle \tilde{X},\tilde{Y}\rangle=0$, and\/ 
	$\langle \tilde{X},\tilde{Y}'\rangle=
	\langle \tilde{X}',\tilde{Y}\rangle=0$ for all\/ $t\in(\R,0)$,
	where\/ $'=d/dt$. 
\end{lemma}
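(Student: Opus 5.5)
The plan is to first replace $X,Y$ by an orthonormal frame of the same plane field, and then rotate that frame within each plane by a suitable angle $\theta(t)$. The rotation angle is determined by a single linear ODE, which can always be solved for germs.

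\emph{Step 1: orthonormalize.} Apply Gram--Schmidt pointwise in $t$:
$$E_1=\frac{X}{|X|},\qquad E_2=\frac{Y-\langle Y,E_1\rangle E_1}{|Y-\langle Y,E_1\rangle E_1|}.$$
Linear independence of $X(t),Y(t)$ for all $t\in(\R,0)$ makes both denominators nonvanishing, so $E_1,E_2$ are smooth germs. They satisfy $|E_1|=|E_2|=1$ and $\langle E_1,E_2\rangle=0$, and they span the same plane as $X,Y$ at every $t$, that is, $\langle X,Y\rangle_{\R}=\langle E_1,E_2\rangle_{\R}$. The normalization $(\R^4,0)$ in the statement cannot literally hold for unit vectors, so I read the germs as taken at $t=0$ with values in $\R^4$.

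\emph{Step 2: rotate.} For a function germ $\theta(t)$ to be chosen, set
$$\tilde X=\cos\theta\,E_1+\sin\theta\,E_2,\qquad \tilde Y=-\sin\theta\,E_1+\cos\theta\,E_2 .$$
These are again orthonormal and span the same plane. Differentiating $\langle E_i,E_j\rangle=\delta_{ij}$ gives $\langle E_i,E_i'\rangle=0$ and $\langle E_2,E_1'\rangle=-\langle E_1,E_2'\rangle$. Using these relations, a direct expansion yields
$$\langle \tilde X,\tilde Y'\rangle=-\theta'+(\cos^2\theta+\sin^2\theta)\langle E_1,E_2'\rangle=-\theta'+\langle E_1,E_2'\rangle.$$
This expansion is the only computation where care is needed: one must check that the cross terms $\cos\theta\sin\theta\,\langle E_i,E_i'\rangle$ vanish, and that the two mixed terms combine through the antisymmetry relation.

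\emph{Step 3: solve for $\theta$.} Define
$$\theta(t)=\int_0^t\langle E_1(u),E_2'(u)\rangle\,du.$$
This is a smooth germ, and with this choice $\langle\tilde X,\tilde Y'\rangle=0$. Finally, differentiating $\langle\tilde X,\tilde Y\rangle=0$ gives $\langle\tilde X',\tilde Y\rangle=-\langle\tilde X,\tilde Y'\rangle=0$. Hence all the required conditions hold for all $t\in(\R,0)$.
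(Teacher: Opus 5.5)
Your proposal is correct: the expansion $\langle\tilde X,\tilde Y'\rangle=-\theta'+\langle E_1,E_2'\rangle$ checks out, and your remark about the $(\R^4,0)$ notation is a sensible reading. The paper does not reprove this lemma (it quotes it from the cited reference), but your construction of orthonormalizing and then rotating within the plane by an angle $\theta$ that solves a first-order ODE is the same device the paper uses to make the director curves of $S_1,\dots,S_4$ constrictively adapted (e.g.\ $\theta'=\kappa_6$ for $S_1$), so it is essentially the same approach.
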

Here, $\inner{~}{~}$ denotes the standard inner product in $\R^4$,
and
$\langle X_1,\ldots,X_k\rangle_{\R}$
means the subspace spanned by $X_1,\ldots,X_k$.
We say that the director curves $X$ and $Y$ are
{\it constrictively adapted\/} if they
satisfy the conditions
$|X|=|Y|=1$, 
$\inner{X}{Y}=0$
and
$\inner{X}{Y'}=0$ for all\/ $t\in(\R,0)$.
In this paper, we assume that 
the director curves of 
all two-ruled hypersurfaces are 
taken to be constrictively adapted.
A map $P:\R\to \gr$ is {\it non-degenerate at\/} $t\in\R$ if 
for a basis $\{X(t),Y(t)\}$ of $P(t)$ satisfies
$X(t)$, $Y(t)$, $X'(t)$, $Y'(t)$ are linearly independent.
The non-degeneracy does not depend on the choice of 
basis and parameter.
A two-ruled hypersurface-germ $f(t,s,r) = \gamma(t) + s X(t) + r Y(t)$
is said to be \emph{non-degenerate\/} if
$P=\langle X,Y\rangle_{\R}
:(\R,0)\to\gr$ is non-degenerate for all $t\in(\R,0)$.
A two-ruled hypersurface $f(t,s,r) = \gamma(t) + s X(t) + r Y(t)$
is a {\it cylinder}\/ if
$P=\langle X,Y\rangle_{\R}$ is a constant plane.
We remark that $f$ is a cylinder if and only if
$
\dim\langle X,Y,X',Y'\rangle_{\R}=2
$
for all $t\in(\R,0)$.
For a non-degenerate two-ruled hypersurface 
$f(t,s,r) = \gamma(t) + s X(t) + r Y(t)$,
there exist functions 
$B(t)=(b_1(t),b_2(t),b_3(t),b_4(t))$ such that
\begin{equation}\label{eq:nondegb}
	\gamma'=b_1X+b_2Y+b_3X'+b_4Y'
\end{equation}
holds for all $t\in(\R,0)$.
Different from the case of ruled surfaces in $\R^3$, 
in four dimensions, 
there is a gap between non-degenerate two-ruled hypersurfaces and cylinders.
We introduce another notion of non-degeneracy.
A map $P:\R\to\gr$ is said to be {\it pseudo-non-degenerate at\/} $t\in\R$ if 
a basis $\{X(t),Y(t)\}$ of $P(t)$ satisfies
$$
\dim\left\langle X(t), Y(t), X'(t), Y'(t) \right\rangle_{\R}=3.
$$
The pseudo-non-degeneracy does not depend on the choice of 
basis and parameter.
\begin{definition}
	A two-ruled hypersurface $f(t,s,r) = \gamma(t) + s X(t) + r Y(t)$
	is said to be \emph{pseudo-non-degenerate}\/ if
	$P=\langle X,Y\rangle_{\R}:(\R,0)\to\gr$ 
	is pseudo-non-degenerate for all $t\in(\R,0)$.
\end{definition}
For a pseudo-non-degenerate two-ruled hypersurface
$f(t,s,r) = \gamma(t) + s X(t) + r Y(t)$ with constrictively 
adapted director curves $X$ and $Y$,
either
$\dim\left\langle X, Y, X'\right\rangle_{\R}=3$ or
$\dim\left\langle X, Y, Y'\right\rangle_{\R}=3$ holds.
By interchanging $X$ and $Y$ if necessary, 
we may assume 
$$\dim\left\langle X,Y,X'\right\rangle_{\R}=3$$
holds for any $t\in(\R,0)$ for a pseudo-non-degenerate two-ruled hypersurface.
By a parameter change, we assume $|X'|=1$ for any $t\in(\R,0)$, 
where pseudo-non-degenerateness
and constrictively adaptedness do not change.
Under this assumption, there exist functions 
$a(t)$ and $B(t)=(b_1(t),b_2(t),b_3(t),b_4(t))$ such that
\begin{equation}\label{eq:nondegp}
	\begin{array}{rl}
		Y'=&aX'\\
		\gamma'=&b_1X+b_2Y+b_3X'+b_4Z\quad
		\left(Z=X\wedge Y\wedge X'\right)
	\end{array}
\end{equation}
holds for any $t\in(\R,0)$. 
Here, we define the triple exterior product of
$X_1,X_2,X_3\in\R^4$ by
$$
X_1\wedge X_2\wedge X_3=\det
\pmt{
	\e_1&\e_2&\e_3&\e_4\\
	X_{11}&X_{12}&X_{13}&X_{14}\\
	X_{21}&X_{22}&X_{23}&X_{24}\\
	X_{31}&X_{32}&X_{33}&X_{34}},
$$
where
$X_i=(X_{i1},X_{i2},X_{i3},X_{i4})$ $(i=1,2,3)$
and
$\e_1=(1,0,0,0)$,
$\e_2=(0,1,0,0)$,
$\e_3=(0,0,1,0)$,
$\e_4=(0,0,0,1)$.
Since $\{X,Y,X',Z\}$ forms an orthonormal frame field along $\gamma$,
we have the following Frenet-Serre type formulas:
\begin{equation}\label{frenetxy}
	\pmt{X\\Y\\X'\\Z}'
	=A
	\pmt{X\\Y\\X'\\Z},\quad
	A=\pmt{
		0 &0 &1&      0\\
		0 &0 &a&      0\\
		-1&-a&0&\delta\\
		0 &0 &-\delta&0},
\end{equation}
where $\delta=-\det(X,Y,X',X'')$.
Since $A\in \mathfrak{o}(4)$, there exists a
unique orthonormal frame $\{X(t),Y(t),X'(t),Z(t)\}$ from a given functions
$a(t),\delta(t)$
satisfying \eqref{frenetxy} up to isometries on $\R^4$.
Furthermore, 
there exists unique $\gamma$ satisfying 
\eqref{eq:nondegp} from a given 
orthonormal frame $\{X(t),Y(t),X'(t),Z(t)\}$ and
$B(t)=(b_1(t),b_2(t),b_3(t),b_4(t))$
up to translations in $\R^4$.

In this paper, for a pseudo-non-degenerate two-ruled hypersurface $f=\gamma+sX+rY$,
we assume $X$ and $Y$ are constrictively adapted and $|X'|=1$.
We discuss frontality of non-degenerate and pseudo-non-degenerate
two-ruled hypersurfaces.
A map $f:(\R^3,0)\to(\R^4,0)$ is called a {\it frontal\/} 
if there exists a map $\nu: (\R^3,0) \to \R^4$ 
such that $|\nu| = 1$
and for any point $p \in (\R^3,0)$, 
and any vector $X \in T_p \R^3$, 
it holds that $\inner{df_p(X)}{\nu(p)}= 0$.
The map $\nu$ 
is called the {\it unit normal vector\/} of $f$.
A frontal $f$ is called a {\it front\/} 
if $(f, \nu)$ is an immersion.
We have the following proposition.
\begin{proposition}\label{prop:frontal}
	Let\/ $f(t,s,r) = \gamma(t) + s X(t) + r Y(t):(\R^3,(0,s,r))\to(\R^4,0)$
	be a two-ruled hypersurface-germ.
	$(1)$ If\/ $f$ is non-degenerate, then\/ $f$ never be a frontal at
	a singular point.
	$(2)$
	If\/ $f$ is pseudo-non-degenerate,
	then\/ $f$ is a frontal at a singular point
	if and only if\/ $b_4=0$ for all $t\in(\R,0)$.
\end{proposition}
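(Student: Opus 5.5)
Both parts come down to computing the partial derivatives $f_t,f_s,f_r$ in the adapted frame and asking whether the three vectors can admit a unit normal vector field that is smooth across the singular set.

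For $(1)$, the frame is $\{X,Y,X',Y'\}$, and we have $f_s=X$, $f_r=Y$ and $f_t=b_1X+b_2Y+(b_3+s)X'+(b_4+r)Y'$ by \eqref{eq:nondegb}. The singular set is therefore $\{s=-b_3(t),\ r=-b_4(t)\}$, a curve in $(\R^3,O)$. Off this set, the unit normal $\nu$ is determined up to sign. It is orthogonal to $X$ and $Y$, so it lies in the $2$-plane $P^\perp$, and it is orthogonal to $(b_3+s)X'+(b_4+r)Y'$. Project $X'$ and $Y'$ onto $P^\perp$; non-degeneracy says these projections form a basis of $P^\perp$. Then $\nu$ is determined by the direction of the vector $(s+b_3,r+b_4)$ in that basis. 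As $(s,r)$ runs around a small circle about the singular point, this direction turns through a full rotation, so $\nu$ has no continuous extension to the singular point. Hence $f$ is not a frontal there.

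For $(2)$, we use \eqref{eq:nondegp}: $f_s=X$, $f_r=Y$, and
$$f_t=b_1X+b_2Y+(b_3+s+ra)X'+b_4Z.$$
Since $\{X,Y,X',Z\}$ is orthonormal, the rank of $df$ drops exactly where $b_3+s+ra=0$ and $b_4=0$ hold simultaneously. So singular points exist only at parameters $t$ with $b_4(t)=0$, and there they form the line $s+ra+b_3=0$.

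If $b_4\equiv0$, then $\nu=Z$ is a smooth unit vector field orthogonal to $f_t$, $f_s$ and $f_r$ everywhere, so $f$ is a frontal.

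Conversely, suppose $f$ is a frontal at a singular point $(t_0,s_0,r_0)$, with $b_4(t_0)=0$. Near that point and off the singular set, $\nu$ is orthogonal to $X$ and $Y$, so
$$\nu=\pm\frac{(b_3+s+ra)Z-b_4X'}{\sqrt{(b_3+s+ra)^2+b_4^2}}.$$
Assume $b_4\not\equiv0$ near $t_0$; we derive a contradiction. Fix $t$ close to $t_0$ with $b_4(t)\ne0$ and vary $s$ across the value $-b_3-ra$. On this segment $\nu$ is continuous and its $X'$-component $\mp b_4/\sqrt{\cdots}$ has fixed sign. However, its $Z$-component changes sign as $s$ crosses the line.

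Now let $t\to t_0$. For $t$ with $b_4(t)\neq 0$, the value of $\nu$ at $s=-b_3-ra$ is $\mp X'$. At $t_0$ itself, for points off the singular line, $\nu=\pm Z$, with opposite signs on the two sides of the line. Continuity of $\nu$ at the singular point forces these limits to agree, and $\pm X'\ne\pm Z$, which is a contradiction.

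The main obstacle is the local nature of the condition: the claim is that $b_4=0$ for all $t$ in a neighbourhood. Points $t$ with $b_4(t)\ne0$ must accumulate at $t_0$ for the argument above to apply. So I would state the result as: $f$ is a frontal near the singular point if and only if $b_4$ vanishes identically near $t_0$. In the argument, take a sequence $t_n\to t_0$ with $b_4(t_n)\ne0$, and compare $\nu(t_n,-b_3-r_0a,r_0)=\pm X'$ with $\nu$ at nearby points on the $t_0$-slice, where $\nu=\pm Z$. This works provided the whole $t_0$-slice near the point is not singular. When $b_4(t_0)=0$ only the line $s+ra+b_3=0$ is singular, so this holds.
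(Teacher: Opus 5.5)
Your proof is correct and is essentially the paper's argument: in both, off the singular set the normal is forced up to sign to be the normalized $f_t\wedge f_s\wedge f_r$, and its frame coefficients $(b_3+s,\,b_4+r)$, resp.\ $(b_3+s+ra,\,b_4)$, have no limiting direction at a singular point. Your winding argument in (1), and in (2) the observation that continuity would force both $\nu(p)\in\{\pm X'(t_0)\}$ and $\nu(p)\in\{\pm Z(t_0)\}$, simply make rigorous the paper's step ``since $s$ is an independent variable, this never occurs''; the remark that $\nu=\pm Z$ has opposite signs on the two sides of the line at $t_0$ is not needed for that contradiction.
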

\begin{proof}
	(1)
	Let $f$ be non-degenerate, and let
	$(b_1,\ldots,b_4)$ be functions defined by \eqref{eq:nondegb}.
	Then we see
	\begin{align*}
		f_t=&b_1X+b_2Y+(b_3+s)X'+(b_4+r)Y',\\
		f_s=&X,\\
		f_r=&Y,\\
		f_t\wedge f_s\wedge f_r=&
		(b_3+s)X\wedge Y\wedge X'+(b_4+r)X\wedge Y\wedge Y'.
	\end{align*}
	Then the limit of the direction of $f_t\wedge f_s\wedge f_r$
	exists if and only if there exist functions $\alpha(t,s,r),\beta(t,s,r)$
	such that
	$\alpha(t,s,r)(b_3+s)+\beta(t,s,r)(b_4+r)=0$
	and $(\alpha(t,s,r)$, $\beta(t,s,r))\ne(0,0)$ hold.
	However, $s$ and $r$ are independent variables,
	this never occurs.
	Thus (1) is proved.
	(2)
	Let $f$ be pseudo-non-degenerate, and let
	$a$ and $(b_1,\ldots,b_4)$ be functions defined by \eqref{eq:nondegp}.
	Then we see
	\begin{equation}\label{eq:difff}
		\begin{array}{rl}
			f_t=&b_1X+b_2Y+(b_3+s+ra)X'+b_4Z,\\
			f_s=&X,\\
			f_r=&Y,\\
			f_t\wedge f_s\wedge f_r=&
			(b_3+s+ra)X\wedge Y\wedge X'+b_4X\wedge Y\wedge Z.
		\end{array}
	\end{equation}
	At a singular point, $b_4=0$.
	If $b_4\equiv0$, then $X\wedge Y\wedge X'$ can be taken
	as a unit normal vector,
	$f$ is a frontal,
	where $\equiv$ stands for the equality holds identically.
	If $b_4$ is non constant near a singular point, 
	then the limit of the direction of $f_t\wedge f_s\wedge f_r$
	exists if and only if there exist functions $\alpha(t,s,r),\beta(t,s,r)$
	such that
	$\alpha(t,s,r)(b_3+s+ra)+\beta(t,s,r)b_4=0$
	and $(\alpha(t,s,r),\beta(t,s,r))\ne(0,0)$ hold.
	However, $s$ is an independent variable,
	this never occurs.
	Thus, (2) is proved.
\end{proof}
We call 
a pseudo-non-degenerate two-ruled hypersurface
with $b_4\equiv0$
a {\it pseudo-non-degenerate two-ruled frontal}.
For frontness of a pseudo-non-degenerate two-ruled frontal,
we have the following.
\begin{proposition}
	Let\/ $f(t,s,r) = \gamma(t) + s X(t) + r Y(t)$ be a pseudo-non-degenerate
	two-ruled frontal.
	Then\/ $f$ is a front at a singular point
	if and only if\/ $X,Y,X',X''$ are linearly independent at that point.
\end{proposition}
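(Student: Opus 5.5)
The plan is to compute the differential of the pair $(f,\nu)$ explicitly, using the Frenet--Serre type formulas \eqref{frenetxy}, and to read off its rank at a singular point.

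First I fix the unit normal. Since $f$ is a pseudo-non-degenerate two-ruled frontal, we have $b_4\equiv0$, and \eqref{eq:difff} becomes
$$
f_t=b_1X+b_2Y+(b_3+s+ra)X',\qquad f_s=X,\qquad f_r=Y .
$$
Because $\{X,Y,X',Z\}$ is orthonormal with $Z=X\wedge Y\wedge X'$, the vector $\nu=Z(t)$ is a unit vector orthogonal to $f_t,f_s,f_r$. So $\nu=Z$ is a unit normal, as already used in the proof of Proposition~\ref{prop:frontal}. Frontness does not depend on the choice of sign of $\nu$, and near a singular point this is the only choice up to sign.

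Next I differentiate $\nu$. By \eqref{frenetxy}, $\nu_t=Z'=-\delta X'$, and since $\nu$ depends only on $t$, $\nu_s=\nu_r=0$. Hence the three columns of $d(f,\nu)$, viewed in $\R^4\times\R^4$, are
$$
(f_t,-\delta X'),\qquad (X,0),\qquad (Y,0).
$$

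Now I evaluate at a singular point. Since $f_t\wedge f_s\wedge f_r=(b_3+s+ra)Z$, the point is singular exactly when $b_3+s+ra=0$, so there $f_t=b_1X+b_2Y$. Subtracting $b_1$ times the second column and $b_2$ times the third from the first column gives $(0,-\delta X')$. Because $X$ and $Y$ are linearly independent and $|X'|=1$, the resulting three columns $(0,-\delta X')$, $(X,0)$, $(Y,0)$ have rank $3$ if and only if $\delta\neq0$ at that point. Thus $(f,\nu)$ is an immersion at the singular point if and only if $\delta(t)\ne0$ there.

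Finally I translate this condition. Since $\delta=-\det(X,Y,X',X'')$, we have $\delta\neq0$ exactly when $X,Y,X',X''$ are linearly independent at that point, which is the claimed criterion.

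I see no real obstacle here. The only step needing care is confirming that $Z$ is a legitimate unit normal defined on a whole neighbourhood, not just at the point; this follows because $b_4\equiv0$ makes $f_t$ lie in $\langle X,Y,X'\rangle_{\R}$ identically.
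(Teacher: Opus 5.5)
Your proof is correct and follows essentially the same route as the paper. Both arguments take $\nu=X\wedge Y\wedge X'$, note that at a singular point the kernel of $df$ is spanned by $\eta=\partial_t-b_1\partial_s-b_2\partial_r$ (your column reduction is exactly this), and test whether $\eta\nu\neq0$. The only difference is in the last step: you read off $\eta\nu=Z'=-\delta X'$ from the Frenet-type formulas and then use $\delta=-\det(X,Y,X',X'')$, whereas the paper writes $\eta\nu=X\wedge Y\wedge X''$ and argues via the linear map $V\mapsto X\wedge Y\wedge V$, whose kernel is $\langle X,Y\rangle_{\R}$.
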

\begin{proof}
	By the proof of Proposition \ref{prop:frontal},
	$f$ is a frontal with a unit normal vector 
	$\nu=X\wedge Y\wedge X'$.
	By \eqref{eq:difff},
	the kernel of $df$ at a singular point is
	generated by $\eta
	=\partial_t-b_1\partial_s-b_2\partial_r$.
	Thus $\eta(X\wedge Y\wedge X')=X\wedge Y\wedge X''$ holds.
	Thus $f$ is a front if and only if
	$X\wedge Y\wedge X'$ and 
	$X\wedge Y\wedge X''$
	are linearly independent.
	Consider the linear map
	$T:\R^4\to\R^4$ defined by $T(V)=X\wedge Y\wedge V$.
	Since $X$ and $Y$ are linearly independent, we have
	$\ker T=\langle X,Y\rangle_{\R}$.
	Thus the image of $T$ is a $2$-dimensional subspace of $\R^4$.
	Now observe that
	$
	T(X')=X\wedge Y\wedge X'$ and
	$T(X'')=X\wedge Y\wedge X''$,
	the vectors $T(X')$ and $T(X'')$ are linearly independent in 
	$\operatorname{Im}T$
	if and only if their classes in the quotient space
	\[
	\R^4 / \ker T \;\cong\; \R^4 / \langle X,Y\rangle_{\R}
	\]
	are linearly independent.
	This is equivalent to that 
	$X,Y,X',X''$ are linearly independent,
	we have the assertion.
\end{proof}

\subsection{Striction surfaces and striction curves}
We discuss an analogy of striction curves for two-ruled hypersurfaces.
We begin with a geometric characterization of striction
curves for ruled surfaces.
Let $\gamma:(\R,0) \to (\R^n,0)$ be a curve-germ.
Let $X:(\R,0) \to \R^n$ be a vector such that 
$|X|=1$ holds.
A {\it ruled surface\/} is defined by
\begin{equation}\label{eq:ruled}
	g(t,s) = \gamma(t) + s X(t).
\end{equation}
A curve $\sigma(t)=\gamma(t)+s(t)X(t)$ is called a {\it striction curve\/}
if $\sigma'\cdot X'\equiv0$ holds.
The ruled surface $g$ is said to be {\it non-cylindrical\/} if
$X'\ne0$ for all $t\in(\R,0)$.
A curve $\sigma(t)=\gamma(t)+s(t)X(t)$ satisfying
$s(t)=-\inner{\gamma'(t)}{X'(t)}/\inner{X'(t)}{X'(t)}$
on a non-cylindrical ruled surface is a striction curve.
If $X'\equiv0$, then for any $s(t)$, the curve
$\sigma(t)=\gamma(t)+s(t)X(t)$ is a striction curve.
See \cite{hayashi} for the behavior of striction curves
in the case $X'(0)=0$ and $X'$ is not identically zero.
We give a characterization of striction curve 
by considering the distance between adjacent rulings.
We fix a parameter $t_0$ and assume $t_0=0$, and fix
a small number $\ep$.
We have
$$
\gamma(\ep)=\gamma(0)+\ep \gamma'(0)+\ep^2a(\ep),\quad
X(\ep)=X(0)+\ep X'(0)+\ep^2b(\ep)
$$
We define a function
\begin{align*}
	d(\ep,s_1,s_2)
	=&
	\big|\gamma(0) + s_1 X(0)-\gamma(\ep) - s_2 X(\ep)\big|^2/2\\
	=&
	\big|(s_1-s_2)X(0)-\ep(\gamma'(0)+s_2X'(0))-\ep^2(a(\ep)+s_2b(\ep))\big|^2/2.
\end{align*}
This measures the distance between 
each point on the rulings at $t=0$ and $t=\ep$.
The differential $d_{s_1}$ and $d_{s_2}$ are
\begin{align}
	d_{s_1}=&X(0)\cdot\Big((s_1-s_2)X(0)-\ep(\gamma'(0)+s_2X'(0))-\ep^2(a(\ep)+s_2b(\ep))\Big),
	\label{eq:ds1} \\
	d_{s_2}=&-X(0)\cdot\Big((s_1-s_2)X(0)-\ep(\gamma'(0)+s_2X'(0))-\ep^2(a(\ep)+s_2b(\ep))\Big)
	\label{eq:ds2} \\
	&\hspace{5mm}-\ep X'(0)\cdot\Big((s_1-s_2)X(0)-\ep(\gamma'(0)+s_2X'(0))-\ep^2(a(\ep)+s_2b(\ep))\Big)
	\nonumber \\
	&\hspace{5mm}-\ep^2b(\ep)\cdot\Big((s_1-s_2)X(0)-\ep(\gamma'(0)+s_2X'(0))-\ep^2(a(\ep)+s_2b(\ep))\Big).
	\nonumber
\end{align}
By \eqref{eq:ds1}, the equation $d_{s_1}=0$ is equivalent to
$$
s_1=s_2+\ep(\gamma'(0)\cdot X(0))+\ep^2(a(\ep)+s_2b(\ep))\cdot X(0).
$$
Substituting this into \eqref{eq:ds2},
we have
$$
d_{s_2}
=
\ep^2
\Big(\gamma'(0)\cdot X'(0)+s_2X'(0)\cdot X'(0)+\ep *\Big),
$$
where $*$ stands for a value that we do not use in the further
calculations.
Thus $(d_{s_1},d_{s_2})=(0,0)$ is equivalent to
$$
s_1=-b_3(0)+\ep *,\quad
s_2=-b_3(0)+\ep *,\quad
\left(
b_3(0)=\dfrac{\gamma'(0)\cdot X'(0)}{X'(0)\cdot X'(0)}\right).
$$
This implies that the striction curve is the locus of
points that minimize, in an infinitesimal sense,
the distance between adjacent rulings.
The definition of the striction curve for two-ruled hypersurfaces
in \cite{sing2ruledhypersurface} has this property.
A ruled surface \eqref{eq:ruled} with $|X|=1$
is a {\it cylinder\/} if $X'\equiv0$.
A non-cylindrical ruled surface \eqref{eq:ruled}
is a {\it cone\/} if $\sigma'\equiv0$,
namely, the image of the striction curve is a single point.
A non-cylindrical ruled surface \eqref{eq:ruled}
is a {\it tangent surface\/} if $\gamma'$ and $X$ is linearly dependent for all
$t\in (\R,0)$.

Let $f(t,s,r)=\gamma(t)+sX(t)+rY(t)$ be a two-ruled hypersurface,
and $X,Y$ are constrictively adapted.
A curve $\sigma(t)=\gamma(t)+s(t)X(t)+r(t)Y(t)$ is a striction curve
if $\sigma'\cdot X'=\sigma'\cdot Y'=0$ for any $t$ holds.
We define
\begin{align*}
	d(\ep,s_1,s_2,r_1,r_2)=&
	\Big|\gamma(0)+s_1X(0)+r_1Y(0)-(\gamma(\ep)+s_2X(\ep)+r_2Y(\ep))\Big|^2/2\\
	=&
	\Big|(s_1-s_2)X(0)+(r_1-r_2)Y(0)\\
	&\hspace{5mm}
	-\ep(\gamma'(0)+s_2X'(0)+r_2Y'(0))+\ep^2*\Big|^2/2.
\end{align*}
Then we see
\begin{align*}
	d_{s_1}=&X(0)\cdot\Big(
	(s_1-s_2)X(0)+(r_1-r_2)Y(0)
	-\ep(\gamma'(0)+s_2X'(0)+r_2Y'(0))+\ep^2*\Big),\\
	d_{r_1}=&Y(0)\cdot\Big(
	(s_1-s_2)X(0)+(r_1-r_2)Y(0)
	-\ep(\gamma'(0)+s_2X'(0)+r_2Y'(0))+\ep^2*\Big).
\end{align*}
Thus $(d_{s_1},d_{r_1},d_{s_2},d_{r_2})=(0,0)$
is equivalent to $s_1=s_2+\ep*$, $r_1=r_2+\ep*$ and
\begin{align*}
	X'(0)\cdot X'(0)s_2+X'(0)\cdot Y'(0)r_2=&-\gamma'(0)\cdot X'(0)+\ep*\\
	X'(0)\cdot Y'(0)s_2+Y'(0)\cdot Y'(0)r_2=&-\gamma'(0)\cdot Y'(0)+\ep*.
\end{align*}
Thus, the striction curve is also the locus of points that
minimize, in an infinitesimal sense,
the distance between adjacent rulings.
If $f$ is non-degenerate, then the matrix
$$
A=\pmt{X'\cdot X'&X'\cdot Y'\\
	X'\cdot Y'&Y'\cdot Y'}
$$
is regular for all $t$, and there exists a unique striction curve.
If $f$ is pseudo-non-degenerate, then 
$\sigma'\cdot X'=\sigma'\cdot Y'=0$ and $\sigma'\cdot X'=0$ are
equivalent. Further, 
both the matrices $A$ and
$$
\pmt{X'\cdot X'&X'\cdot Y'&-\gamma'\cdot X'\\
	X'\cdot Y'&Y'\cdot Y'&-\gamma'\cdot Y'}
$$
are of rank one for all $t$,
there exists a surface $\tilde\sigma(t,k)=\gamma(t)+s(t,k)X(t)+r(t,k)Y(t)$
such that for any curve $\sigma(t)=\gamma(t)+s(t)X(t)+r(t)Y(t)$
that satisfies $\image \sigma\subset \image \tilde\sigma$ 
is a striction curve.
We call this surface $\tilde\sigma$ a {\it striction surface\/}
for a pseudo-non-degenerate two-ruled hypersurface.
We set
\begin{equation}\label{eq:str1}
	s(t,r)=-a(t)r-b_3(t)\quad
	\left(b_3(t)=\dfrac{\gamma'(t)\cdot X'(t)}{X'(t)\cdot X'(t)}\right).
\end{equation}
Then $\tilde\sigma(t,k)=\gamma(t)+s(t,k)X(t)+kY(t)$ is a striction surface.
In fact, 
$\tilde\sigma(t,k(t))'\cdot X'(t)=(\gamma'+(-ak-b_3)X'+kY')\cdot X'=0$
holds for any function $k(t)$.
Substituting $s(t,r)$ into the formula of $f$,
we have an explicit parameterization
\begin{equation}\label{eq:tctx}
	\sigma_1(t,r)=\tilde \gamma(t)+r\tilde X(t)\quad
	\left(
	\tilde \gamma(t)=
	\gamma(t)-b_3(t)X(t),\ 
	\tilde X(t)=
	\dfrac{-a(t)X(t)+Y(t)}{|-a(t)X(t)+Y(t)|}
	\right)
\end{equation}
for the striction surface.
If $a'\ne0$, then this is a non-cylindrical ruled surface,
and the striction curve of $\sigma_1$ is
\begin{equation}\label{eq:str2}
	\sigma_2(t)=\tilde \gamma(t)+r(t)\tilde X(t),\quad
	r(t)=-\dfrac{\tilde \gamma'(t)\cdot \tilde X'(t)}{\tilde X'(t)\cdot \tilde X'(t)}.
\end{equation}
We call $\sigma_2$ the {\it second striction curve}.
By a direct calculation, we have
\begin{equation}
	\label{eq:c'X'}
	\begin{array}{rl}
		\tilde{\gamma}'&=(b_1-b_3')X+b_2Y,\quad
		\tilde{X}'=-a'(a^2+1)^{-1/2}(X+aY),\\
		\inner{\tilde{X}'}{\tilde{X}'}
		&=
		\dfrac{(a')^2}{(a^2+1)^2},\quad
		\inner{\tilde{\gamma}'}{\tilde{X}'}
		=
		\dfrac{-a'}{(a^2+1)^{3/2}}\omega,
	\end{array}
\end{equation}
where we set
\begin{equation}\label{eq:omega}
	\omega(t)=a(t)b_2(t)+b_1(t)-b_3'(t).
\end{equation}
Thus we obtain a parametrization of second striction curve
\begin{equation}\label{eq:sigma2}
	\sigma_2(t)=\tilde \gamma(t)+
	\dfrac{\omega(t)(a(t)^2+1)^{1/2}}{a'(t)}
	\tilde X(t).
\end{equation}
We consider the conditions under which 
striction surface is special.
A pseudo-non-degenerate two-ruled hypersurface 
is said to be {\it cylinder type\/} 
(respectively, {\it cone type\/}, {\it tangent developable type\/}) 
if whose striction surface is a cylinder (respectively, a cone,
a tangent developable surface).
By the above arguments,
the image of striction surface of a 
pseudo-non-degenerate two-ruled hypersurface 
never be a single point.
Moreover, the image of it cannot be 
a curve unless the original hypersurface is of cylinder, 
cone type or tangent developable type.
\begin{theorem}
	A pseudo-non-degenerate two-ruled hypersurface 
	$f(t,s,r) = \gamma(t) + s X(t) + r Y(t)$
	constructed from $a,\delta,B$ is
	of cylinder type
	if and only if $a'\equiv0$,\/
	and\/ $f$ is of cone type if and only if
	$$
	a'\ne0
	\quad\text{and}\quad
	a''(t)\omega(t)-a'(t)\Big(\omega'(t)+a'(t)b_2(t)\Big)\equiv0.
	$$
Moreover, $f$ is tangent developable type if and only if
$\omega\equiv b_4\equiv0$.
\end{theorem}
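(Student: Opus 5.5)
The plan is to reduce all three assertions to the classical notions for the ruled surface $\sigma_1(t,r)=\tilde\gamma(t)+r\tilde X(t)$ of \eqref{eq:tctx}, which parametrizes the striction surface. Each condition is then read off from the frame $\{X,Y,X',Z\}$ and the Frenet--Serret type formulas \eqref{frenetxy}.

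First I recompute the two derivatives, keeping every term. Differentiating $-aX+Y$ and using $Y'=aX'$, the $X'$-terms cancel. This gives
\[
\tilde X'=-a'(a^2+1)^{-3/2}(X+aY).
\]
So $\tilde X'\equiv0$ if and only if $a'\equiv0$, which is the cylinder-type statement. (The exponent printed in \eqref{eq:c'X'} looks like a misprint, but only $a'\ne0$ matters here.) Likewise, from \eqref{eq:nondegp} we get $\tilde\gamma'=(b_1-b_3')X+b_2Y+b_4Z$. The $X'$-term cancels by the choice of $b_3$, while the $Z$-term survives in general; \eqref{eq:c'X'} omits it.

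I then introduce the orthonormal pair $\tilde X$ and $N=(X+aY)/\sqrt{a^2+1}$. These span the ruling plane $\langle X,Y\rangle_{\R}$, and $\tilde X'$ is a multiple of $N$. In this basis
\[
\tilde\gamma'\cdot N=\omega/\sqrt{a^2+1},\qquad \tilde\gamma'\cdot\tilde X=(b_2-a(b_1-b_3'))/\sqrt{a^2+1}=\big(b_2(1+a^2)-a\omega\big)/\sqrt{a^2+1}.
\]
For the tangent developable type, I need $\tilde\gamma'$ parallel to $\tilde X$ for all $t$. That means both the $N$-component and the $Z$-component vanish, i.e. $\omega\equiv0$ and $b_4\equiv0$. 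Conversely, if $\omega\equiv b_4\equiv0$, then $\tilde\gamma'$ is a multiple of $\tilde X$. (I would also note the implicit non-cylindricity requirement $a'\ne0$.)

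For the cone type, assume $a'\ne0$, so $\sigma_1$ is non-cylindrical and its striction curve is $\sigma_2=\tilde\gamma+\rho\tilde X$ with $\rho=\omega(a^2+1)^{1/2}/a'$ by \eqref{eq:sigma2}. Then $\sigma_2'=\tilde\gamma'+\rho'\tilde X+\rho\tilde X'$. The $N$-component is $\omega/\sqrt{a^2+1}-\rho a'/(a^2+1)$, which vanishes identically; this is just the defining property of the striction curve. The cone condition $\sigma_2'\equiv0$ therefore reduces to the vanishing of the $\tilde X$-component, $\tilde\gamma'\cdot\tilde X+\rho'$, together with the $Z$-component $b_4$. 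The latter vanishes in the frontal situation implicitly assumed by \eqref{eq:c'X'}; I would state this explicitly.

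The main computation is expanding $\rho'$ and clearing the factor $(a')^2(a^2+1)^{-1/2}$. The terms in $\omega a a'^2$ coming from $\tilde\gamma'\cdot\tilde X$ and from differentiating $(a^2+1)^{1/2}$ cancel. After dividing by $a^2+1$, what remains should be exactly $a'(\omega'+a'b_2)-a''\omega$, giving the stated criterion. I expect the bookkeeping in this step, together with the treatment of the $b_4Z$ term, to be the main obstacle.
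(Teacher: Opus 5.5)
Your proposal is correct and follows essentially the paper's route: you differentiate $\tilde X$, $\tilde\gamma$ and $\sigma_2$ using the frame equations and read off the three conditions, and your splitting along $\tilde X$, $N$, $Z$ is only a tidier bookkeeping of the paper's direct differentiation of $\sigma_2=\tilde\gamma+(\omega/a')(-aX+Y)$. Your caveats are also justified: in fact $\sigma_2'=\frac{a''\omega-a'(\omega'+a'b_2)}{(a')^2}(aX-Y)+b_4Z$, and the paper's formula drops the $b_4Z$ term, so the cone criterion genuinely needs $b_4\equiv0$, and the tangent-developable criterion needs $a'\ne0$.
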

\begin{proof}
By \eqref{eq:c'X'}, 
the striction surface is a cylinder if $\tilde X'=0$ holds for any $t$.
The striction surface is a cone if $\sigma_2'=0$ holds for any $t$.
By \eqref{eq:sigma2}, we have
\begin{align*}
\sigma_2'(t)=\dfrac{a''(t)\omega-a'(t)\big(\omega'+a'(t)b_2(t)\big)}{a'(t)^2}\Big(a(t)X(t)-Y(t)\Big).
\end{align*}
Then we obtain the assertion for the cone type.
Differentiating \eqref{eq:tctx},
we see $\tilde\gamma'=(b_1-b_3')X_1+b_2Y+b_4Z$.
Since the director curve of $\sigma_1$ is proportional to $-aX+Y$,
the assertion for the tangent developable type follows.
\end{proof}
\subsection{Singularities and their criteria}\label{sec:sing}
\begin{definition}
	Two map germs $f,g:(\R^3,0)\to(\R^4,0)$ 
	are said to be {\it $\A$-equivalent} 
	if there exist a diffeomorphism $\phi:(\R^3,0)\to(\R^3,0)$ of the domain 
	and a diffeomorphism $\Phi:(\R^4,0)\to(\R^4,0)$ of the codomain 
	such that
	$$
	\Phi\circ f\circ \phi^{-1}=g.
	$$
\end{definition}
A map-germ $f$ is called a 
{\it Whitney umbrella\/ $\times$ interval\/}
if it is $\A$-equivalent to 
$(t,s,r)\mapsto(t,s,r^2,tr)$ at the origin.
A map-germ $f$ is called a {\it cuspidal edge\/} 
if it is $\A$-equivalent to $(t,s,r)\mapsto(t,s,r^2,r^3)$ at the origin.
A map-germ $f$ is called a {\it swallowtail\/} 
if it is $\A$-equivalent to $(t,s,r)\mapsto(t,s,3r^4+2tr^2,4r^3+tr)$ at the origin.
A map-germ $f$ is called a {\it cuspidal butterfly\/}
if it is $\A$-equivalent to 
$(t,s,r)\mapsto(t,s,-5r^4-4tr^2-2sr,4r^5+2tr^3+sr^2)$ at the origin.
A map-germ $f$ is called a {\it cuspidal cross cap\/} $\times$ {\it interval\/}
if it is $\A$-equivalent to 
$(t,s,r)\mapsto(t,s,r^2,sr^3)$ at the origin.
A Whitney umbrella\/ $\times$ interval is not a frontal,
a cuspidal cross cap\/ $\times$ interval is a frontal but not a front
at the singular points.
A cuspidal edge, swallowtail and cuspidal butterfly are fronts at the singular points.

There are useful methods to determine 
whether these singularities are of the types mentioned above.
\begin{lemma}
	Let\/ $f:(\R^3,0)\to(\R^4,0)$ be a map-germ, and\/
	$(t,s,r)$ be a coordinate system with\/ $f_t(0,0,0)=0$. 
	Then\/ $f$ is a Whitney umbrella\/ $\times$ interval if and only if
	$$
	\det(f_s,f_r,f_{rt},f_{tt})\neq 0
	\quad\text{or}\quad
	\det(f_s,f_r,f_{st},f_{tt})\neq 0
	$$
	at\/ $(t,s,r)=(0,0,0)$ hold.
\end{lemma}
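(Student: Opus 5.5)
The plan is to reduce the statement to the classical Whitney umbrella criterion for maps $(\R^2,0)\to(\R^3,0)$, treating the second source variable as a parameter. We want a form of the map in which one source variable and one target coordinate split off as an unfolding parameter. Since $f_t(0)=0$, the rank of $df_0$ is at most two. Each of the two determinant conditions forces $f_s(0)$ and $f_r(0)$ to be linearly independent, so the rank is exactly two. After a linear change of coordinates in the target we may write
\[
f(t,s,r)=(s,r,g_1(t,s,r),g_2(t,s,r)),
\]
with $dg_i(0)=0$. In these coordinates the two determinants become the $2\times2$ minors
\[
\det\pmt{g_{1,rt}&g_{1,tt}\\ g_{2,rt}&g_{2,tt}}(0),\qquad
\det\pmt{g_{1,st}&g_{1,tt}\\ g_{2,st}&g_{2,tt}}(0).
\]

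For sufficiency, assume the first minor is nonzero. Regard $s$ as a parameter and set $h_s(t,r)=(r,g_1,g_2)$. This is a one-parameter family of maps $(\R^2,0)\to(\R^3,0)$, and $h_0$ has a corank one singularity at the origin with null direction $\partial_t$. The condition $\det(h_r,h_{tt},h_{rt})\ne0$ is exactly the known criterion (Whitney; Fukui--Hasegawa) for $h_0$ to be a cross cap. A cross cap is $\A$-stable. Applying the standard unfolding argument (cf.\ Martinet), any unfolding of a stable germ is $\A$-equivalent to the trivial unfolding. This gives $f\sim(s,\text{cross cap})$, which is Whitney umbrella $\times$ interval. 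The second case, with the second minor nonzero, is identical after interchanging the roles of $s$ and $r$. Concretely, $r$ becomes the parameter and the cross cap criterion is applied in the $(t,s)$-plane.

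For necessity, the determinant conditions are $\A$-invariant, because both the hypothesis $f_t=0$ and the vanishing of both minors are preserved. More precisely, the minors are the coefficients of the intrinsic second-order part of the map restricted to the kernel direction, taken modulo the image of $df$. Under this reading, the 2-jet condition transforms by nonzero factors under coordinate changes. It then suffices to verify the conditions on the normal form $(t,s,r^2,tr)$. There the kernel is $\partial_r$, so we relabel variables accordingly and check that $\det(f_t,f_s,f_{rr},f_{rt})\ne0$ holds.

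The main obstacle is this invariance step. Source coordinate changes preserving $f_t(0)=0$ may mix $t$ with $s,r$ at first order, which introduces terms such as $f_{ts}$ into $f_{tt}$. We must show that modulo $\langle f_s,f_r\rangle$ only a nonzero multiple of the original determinant survives. I would handle this by writing $\tilde t=\lambda t+(\text{terms in }s,r)$ and expanding. If the invariance computation becomes messy, I would instead cite the intrinsic formulation: the pair of conditions says that the map $\eta\mapsto(\eta f_r,\eta\eta f)$ modulo $\operatorname{Im}df_0$ has rank two, which is manifestly coordinate independent.
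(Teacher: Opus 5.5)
Your proposal is correct in outline, but it takes a different route from the paper. The paper gives no argument and simply cites Morin's canonical-form result (\cite[Theorem 2.6]{morin}).

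Your approach has two parts. For sufficiency, you freeze one source variable and recognize the determinant condition as Whitney's cross cap criterion $\det(h_r,h_{tt},h_{rt})\ne0$ for the slice $h_0:(\R^2,0)\to(\R^3,0)$. You then invoke Mather's theorem that every unfolding of a stable germ is trivial. For necessity, you use coordinate invariance of the condition plus a direct check on $(t,s,r^2,tr)$. What your route buys is a self-contained derivation from two classical facts. It also explains why the criterion is a disjunction: the two determinants correspond to choosing $s$ or $r$ as the unfolding parameter. The citation is of course shorter.

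Two points need tightening.

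First, a linear change of target coordinates alone does not produce $f=(s,r,g_1,g_2)$. You also need the source change $(t,s,r)\mapsto(t,f_1,f_2)$, and this alters $\partial_t$. Since $f_t(0)=0$, the chain rule shows that $f_{tt}$, $f_{rt}$ and $f_{st}$ change at $0$ only by multiples of $f_s$ and $f_r$. Hence both determinants are unchanged, and the reduction is legitimate.

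Second, the individual minors are not invariant: interchanging $s$ and $r$ swaps them. So neither minor ``transforms by a nonzero factor.'' Your fallback intrinsic formulation, which singles out $\eta f_r$, is likewise not coordinate-free. Only the vanishing of both minors is invariant. Its intrinsic form (when $\rank df_0=2$) is:
\[
[f_{tt}]\ne0 \text{ and } [f_{tt}],[f_{st}],[f_{rt}] \text{ span } \R^4/\image df_0 .
\]
Under a source change, this pair of determinants transforms by a nonzero scalar times an invertible linear map. Under a target change it is multiplied by $\det d\Phi_0$. This is what your proposed expansion actually proves, and with it the necessity direction goes through.
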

Although this fact is well known, a proof can be found in 
\cite[Theorem 2.6]{morin}.

Let $f:(\R^3,0)\to(\R^4,0)$ be a frontal,
and let $\nu$ be a unit normal vector of $f$.
A function $\lambda$ 
is called a \emph{identifier of singularities\/}
if $\lambda$ is a non-zero scalar multiple of
$$
\det(f_t,f_s,f_r,\nu).
$$
If $\lambda$ is an identifier of singularities, 
$\lambda^{-1}(0)$ is the set of singular points $S(f)$.
The singular point $0$ of $f$ is said to be {\it non-degenerate\/}
if $d\lambda_0\ne0$.
If $0$ is non-degenerate, then $S(f)$ is a manifold near $0$.
We assume $\rank df_0=2$, then
there exists a vector field $\eta$ 
such that for any $p \in S(f)$, it holds that
$\ker df_p := \left\langle \eta_p \right\rangle_{\R}$.
This $\eta$ is called a {\it null vector field}.
We set
$$
S_2(f)=
\{p\in S(f)\,|\,\eta_p\in T_pS(f)\}
=
\{p\,|\,\lambda(p)=\eta\lambda(p)=0\}.
$$
We call $S_2(f)$ the {\it second singular set}.

Criteria for cuspidal edges, swallowtails and 
cuspidal butterflies are given through 
the identifier of singularities and the null vector field.

\begin{lemma}\label{lem:ak}
	{\rm (\cite[Corollary 2.5]{ak})}
	Let\/ $f:(\R^3,0)\to(\R^4,0)$ be a front with\/ $\rank df_0=2$. 
	Let\/ $\lambda$ be an identifier of singularities 
	and\/ $\eta$ be a null vector field.
	Then\/ $f$ is a cuspidal edge if and only if\/ 
	$
	\eta\lambda(0)\ne0
	$
	hold, $f$ is a swallowtail if and only if
	$$
	\rank d(\lambda,\eta\lambda)=2,\quad
	\eta\lambda(0)=0,\quad
	\eta\eta\lambda(0)\ne0
	$$
	hold, and\/ $f$ is a cuspidal butterfly if and only if it holds that
	$$
	\rank d(\lambda,\eta\lambda,\eta\eta\lambda)=3,\quad
	\eta\lambda(0)=0,\quad
	\eta\eta\lambda(0)=0,\quad
	\eta\eta\eta\lambda(0)\neq0.
	$$
\end{lemma}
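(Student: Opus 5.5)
The statement is Lemma~\ref{lem:ak}, quoted from \cite[Corollary 2.5]{ak}. My plan is to reduce it to the known criteria for fronts of corank one, in particular the Kokubu--Rossman--Saji--Umehara--Yamada criteria for cuspidal edges and swallowtails and Izumiya--Saji's criterion for cuspidal butterflies. These are stated for fronts whose singular points are non-degenerate in the appropriate sense.

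First, normalise coordinates. Since $\rank df_0=2$ and $f$ is a front, we may choose coordinates $(t,s,r)$ on the source so that $\ker df_0$ is spanned by $\partial_r$. Using a diffeomorphism of the target, we may then write
\[
f(t,s,r)=(t,s,f_3(t,s,r),f_4(t,s,r)).
\]
This makes $f$ a one-parameter unfolding in $s$ of a front of the form $(t,r)\mapsto(t,f_3,f_4)$. Under this normalisation the identifier $\lambda$ and the null vector field $\eta$ transform only by non-zero multiples and by vector fields agreeing with $\eta$ on $S(f)$. So the conditions in the statement are intrinsic: $\eta\lambda(0)\ne0$; the rank conditions together with $\eta^k\lambda(0)=0$ for the smaller $k$; and $\eta^{k}\lambda(0)\ne0$. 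I would check this invariance directly, using that $\eta\lambda$ modulo $\lambda$ is well defined up to a unit.

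Second, identify the germ with the generating-family (Legendrian) picture. Because $f$ is a front, it is locally the discriminant of a generating family $F(t,s,r,x)$ with one extra variable. The Legendrian lift is $(f,\nu)$, and it is an immersion by the front assumption. In this setting, the conditions $\eta^j\lambda=0$ for $j<k$ and $\eta^k\lambda\ne0$ correspond to $F$ having an $A_{k+1}$ singularity in the fibre variable. The rank conditions on $d(\lambda,\eta\lambda,\dots)$ correspond to $F$ being a versal unfolding. Versal $A_2$, $A_3$ and $A_4$ unfoldings with three parameters have as discriminants, up to $\A$-equivalence, exactly the cuspidal edge, the swallowtail and the cuspidal butterfly listed above, the latter two times trivial factors where needed. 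Arnold--Zakalyukin's theorem then gives the ``if'' direction. The ``only if'' direction follows by checking the conditions on the normal forms, using the invariance from the first step.

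The main obstacle is the dictionary between $(\lambda,\eta)$ and the $\mathcal{K}$-type and versality of the generating family. Concretely, this means showing that $\eta^j\lambda$ restricted to $S(f)$ agrees, up to units, with $\partial_x^{j+2}F$ along the critical set, and that the rank condition is equivalent to $\mathcal{R}^+$-versality. I would handle this by computing in the normal form above, taking $\lambda=\det(f_t,f_s,f_r,\nu)$ and $\eta=\partial_r$, and comparing with the standard computation in the surface case.
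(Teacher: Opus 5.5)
The paper gives no proof of this lemma; it imports it from \cite[Corollary 2.5]{ak}. Your outline is sound and is the standard way such $A_k$-criteria are proved: build a generating family, read the $A_{k+1}$-type off the $\eta^j\lambda(0)$, turn the rank condition into versality, then apply Arnold--Zakalyukin. Compared with the citation, it gives a self-contained proof, at the cost of the dictionary step you leave open.

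That step closes in your own normal form $f=(t,s,f_3,f_4)$ with $\eta=\partial_r$. Take $F(r;X)=\inner{X-f(X_1,X_2,r)}{\nu(X_1,X_2,r)}$ with $X\in\R^4$. Its discriminant is the image of $f$, and $L=\{F=F_r=0\}=\{X=f(X_1,X_2,r)\}$ is identified with the source. Since $\inner{f_r}{\nu}=0$, you can write $f_r=\mu w$ with $w=(0,0,-\nu_4,\nu_3)$. Then $\lambda$ is a unit times $\mu$, the front condition gives $\inner{w}{\nu_r}(0)\ne0$, and
\[
\partial_r^{j+2}F|_L\equiv-\inner{w}{\nu_r}\,\partial_r^{j}\mu
\quad\text{modulo }(\mu,\partial_r\mu,\dots,\partial_r^{j-1}\mu).
\]
This reads off the $A_{k+1}$-type of $F(\cdot;f(0))$ from the values $\eta^j\lambda(0)$. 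When $F(\cdot;f(0))$ is of type $A_{k+1}$, $\mathcal{K}$-versality is equivalent to $(F,F_r,\dots,\partial_r^{k+1}F)$ being a submersion. Since $(F,F_r)$ is a submersion ($\partial_XF=\nu$, $\partial_XF_r=\nu_r$), this is equivalent to $(\partial_r^2F,\dots,\partial_r^{k+1}F)|_L$ being a submersion. By the triangular relation above, that is exactly $\rank d(\lambda,\eta\lambda,\dots,\eta^{k-1}\lambda)=k$, so no coordinates beyond yours are needed.

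Several points in the write-up need fixing.
\begin{itemize}
\item Drop the announced reduction to the surface criteria of Kokubu--Rossman--Saji--Umehara--Yamada and Izumiya--Saji. They concern fronts in $\R^3$, and slicing in $s$ cannot give the $\A$-type of the three-dimensional germ. The cuspidal butterfly here is the $A_4$ front in $\R^4$, a non-trivial unfolding of the surface cuspidal butterfly. Your second step does not use these criteria anyway.
\item State the dictionary modulo lower-order terms on $L$, not ``restricted to $S(f)$''; on $S(f)$ one has $\lambda\equiv0$.
\item Pick one picture. In the discriminant picture the right notion is $\mathcal{K}$-versality with four parameters; your $F(t,s,r,x)$ has one parameter too few. ``Three parameters'' and $\mathcal{R}^+$-versality belong to the graph-like picture.
\item It is the cuspidal edge and the swallowtail, not ``the latter two'', that carry trivial factors.
\item For the ``only if'' direction, check the conditions on the genuine discriminant forms $(t,s,3r^4+tr^2,4r^3+2tr)$ and $(t,s,-5r^4-3tr^2-2sr,4r^5+2tr^3+sr^2)$. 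The forms printed in the paper have misplaced coefficients and, as printed, are not fronts.
\end{itemize}
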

For a non-front singularity, we have the following criteria.
Let $f:(\R^3,0)\to(\R^4,0)$ be a frontal,
and let $\nu$ be a normal vector of $f$, not necessary unit.
Let $\lambda$ be an identifier of  singularities,
and let $\eta$ be a null vector field.
We assume $\eta\lambda(0)\ne0$.
Then by this assumption, one can take a parametrization
$\Sigma(t,r)$ of $S(f)$.
We set $\hat f(t,r)=f(\Sigma(t,r))$.
We define a function $\psi(t,r)$ by
\begin{equation}\label{eq:defpsi1}
	\psi(t,r)=\det\Big(\hat f_t,\hat f_r,\nu(\Sigma(t,r)),(\eta\nu)(\Sigma(t,r))\Big).
\end{equation}
We have the following theorem.
\begin{theorem}\label{lem:ccri}
	Let\/ $f:(\R^3,0)\to(\R^4,0)$ be a frontal satisfying\/
	$\eta\lambda(0)\ne0$, where\/ 
	$\lambda$ is an identifier of singularities
	and\/ $\eta$ is a null vector field.
	Let\/ $\Sigma(u,v)$ be a parametrization of\/ $S(f)$ and
	we set\/ $\psi$ as in\/ \eqref{eq:defpsi1}.
	Then\/ $f$ is a cuspidal cross cap\/ $\times$ interval if and only if\/ 
	$\psi=0$ and\/ $d\psi_0\ne0$.
\end{theorem}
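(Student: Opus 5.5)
We reduce the statement to the known criterion for the cuspidal cross cap in one dimension lower (a surface in $\R^3$) times an interval. The plan is to normalize coordinates so that $f$ is put, up to $\A$-equivalence, in the form $(t,\ g(t,u,v))$ with $g(t,\cdot)$ a one-parameter family of map-germs $(\R^2,0)\to(\R^3,0)$. Then the invariance of $\psi$ is used to reduce to the criterion of Fujimori--Saji--Umehara--Yamada, where $\psi$ restricted to each slice is the function $\det(\hat g_u,\nu,\eta\nu)$ along the singular curve.

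\textbf{Step 1 (normal form).} Since $\eta\lambda(0)\ne0$, the point $0$ is non-degenerate and $S(f)$ is a regular surface transverse to $\eta$. We also have $\rank df_0=2$. Choose coordinates $(x_1,x_2,x_3)$ on the source with:
\begin{itemize}
\item $S(f)=\{x_3=0\}$;
\item $\eta=\partial_{x_3}$ along $S(f)$.
\end{itemize}
Choose coordinates on the target so that $f=(x_1,x_2,f_3,f_4)$, with $\partial_{x_3}f_3=\partial_{x_3}f_4=0$ on $x_3=0$. Then $\hat f(t,r)=f(t,r,0)$ is an immersion. Take $\nu$ to be normalized, for instance $\nu=(*,*,\nu_3,\nu_4)$ with the first two components determined by orthogonality.

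\textbf{Step 2 (invariance).} We must check that the two conditions $\psi=0$ and $d\psi_0\ne0$ are independent of the choices of $\lambda$, $\eta$, $\Sigma$, $\nu$ and of the coordinates. The mechanism is as follows:
\begin{itemize}
\item Changing $\nu$ to $h\nu$ multiplies $\psi$ by $h^2$, since the extra term $(\eta h)\nu$ is killed by the column $\nu$.
\item Changing $\eta$ to $k\eta+(\text{vector field vanishing on }S(f))$ changes $\eta\nu$ on $S(f)$ by $k\,\eta\nu$ plus a combination of $\nu$ and of $df(\text{tangent to }S)$, i.e.\ of $\hat f_t,\hat f_r$. The vanishing-field part contributes $df$-directions; on $S(f)$ the image of $df$ is spanned by $\hat f_t,\hat f_r$. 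These extra terms drop out of the determinant.
\item A reparametrization of $\Sigma$ multiplies $\psi$ by a Jacobian.
\item A diffeomorphism $\Phi$ of the target acts through $d\Phi$ on tangent vectors and through $(d\Phi^{-1})^T$ on normals. The determinant then changes by a nonvanishing factor, modulo terms proportional to $\inner{\hat f_*}{\nu}=0$.
\end{itemize}
Hence $\psi$ is well defined up to a nonvanishing multiple, which is exactly what the criterion needs.

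\textbf{Step 3 (computation).} In the normal form, write $f_3=x_3^2\alpha$ and $f_4=x_3^2\beta$; the frontal condition forces the $2$-jet in $x_3$ to be compatible with a smooth normal. Using the standard frontal normal form, after a target change we reach $f=(x_1,x_2,x_3^2,\,x_3^3\,c(x_1,x_2,x_3))$. A direct computation then gives $\psi(x_1,x_2)$ as a nonzero multiple of $c(x_1,x_2,0)$. The conditions become:
\begin{itemize}
\item $\psi=0$ at $0$ and $d\psi_0\ne0$, meaning $c(0)=0$ and $dc_0|_{x_3=0}\ne0$;
\item after rotating the $(x_1,x_2)$ coordinates, $\partial_{x_2}c(0)\ne0$.
\end{itemize}
The cuspidal cross cap $\times$ interval $(t,s,r^2,sr^3)$ visibly satisfies these conditions.

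\textbf{Step 4 (sufficiency).} Conversely, if the conditions hold, apply the two-dimensional result with $x_1$ as a parameter. The surface criterion of Fujimori--Saji--Umehara--Yamada, in its version with parameters (e.g.\ via the versality argument), gives $\A$-equivalence to $(x_1,x_2,x_3^2,x_2x_3^3)$. Alternatively, one can directly use that $x_3^3c$ can be reduced by $x_3^2$-multiples to $x_3^3\,c(x_1,x_2,0)$, and then use $c(x_1,x_2,0)$ as a new coordinate in place of $x_2$.

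The main obstacle is Step 4: rigorously eliminating the higher $x_3$-dependence of $c$ by target diffeomorphisms. Here I would use the Malgrange preparation argument, writing $x_3^3c=x_3^3c_0+x_3^4(\cdots)$ and absorbing the even part into the coordinate $x_3^2$. This is also the place where it would be most convenient to cite the existing parametric criterion rather than redo it.
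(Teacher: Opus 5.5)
\textbf{Gap in Step 4.} Your overall plan (invariance of $\psi$, reduction to $(x_1,x_2,x_3^2,x_3^3c)$, and $\psi$ being a nonzero multiple of $c(x_1,x_2,0)$) matches the paper's. However, the sufficiency step, which you yourself flag as the main obstacle, is left open, and the concrete reduction you sketch does not work.

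Changing only the fourth target coordinate can do two things: subtract functions of $(X_1,X_2,X_3)=(x_1,x_2,x_3^2)$, which are even in $x_3$, and multiply the odd part $x_3^3c$ by a unit. That cannot turn $x_3^3c(x_1,x_2,x_3^2)$ into $x_3^3c(x_1,x_2,0)$. For $c=x_2+x_3^2$ the zero sets $\{x_2=-x_3^2\}$ and $\{x_2=0\}$ differ, so no unit relates the two functions.

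Nor is a ``parametric FSUY criterion via versality'' available off the shelf. The cuspidal cross cap, like the cuspidal edge, is not finitely $\A$-determined, so classical versality does not apply. A parametric criterion would need exactly the normal-form argument you are trying to avoid.

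The missing idea is a \emph{source} change that uses the whole function $c$, not its restriction to $x_3=0$:
\begin{enumerate}
\item After reaching $(x_1,x_2,x_3^2\alpha,x_3^2\beta)$, use $\eta\lambda(0)\ne0$ to get $\alpha(0)\ne0$. Replace $x_3$ by $x_3\sqrt{|\alpha|}$, changing the sign of $X_3$ if needed, to get $(x_1,x_2,x_3^2,x_3^2\beta)$.
\item Split $\beta=\beta_1(x_1,x_2,x_3^2)+x_3\beta_2(x_1,x_2,x_3^2)$ and remove $x_3^2\beta_1$ by $X_4\mapsto X_4-X_3\beta_1(X_1,X_2,X_3)$. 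Now $c=\beta_2$ depends only on $(x_1,x_2,x_3^2)$.
\item With $\partial_{x_2}c(0)\ne0$, put $\tilde x_2=c(x_1,x_2,x_3^2)$ and solve $x_2=f_5(x_1,\tilde x_2,x_3^2)$. Then $f=(x_1,f_5(x_1,\tilde x_2,x_3^2),x_3^2,\tilde x_2x_3^3)$.
\item The second component is a function of the first three components of $f_0=(x_1,\tilde x_2,x_3^2,\tilde x_2x_3^3)$. Hence $f=\Phi\circ f_0$ with $\Phi(X)=(X_1,f_5(X_1,X_2,X_3),X_3,X_4)$.
\end{enumerate}
No preparation theorem or versality is needed.

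\textbf{Gap in Step 2.} The invariance under target diffeomorphisms is not actually argued. ``Modulo terms proportional to $\inner{\hat f_*}{\nu}$'' does not establish it. For arbitrary vectors, $\det(Wu_1,Wu_2,{}^tW^{-1}n_1,{}^tW^{-1}n_2)$ is not a multiple of $\det(u_1,u_2,n_1,n_2)$. You need two facts:
\begin{enumerate}
\item With $W=d\Phi\circ f$, one has $\eta W=0$ on $S(f)$ because $df(\eta)=0$ there. Hence $\eta\tilde\nu={}^tW^{-1}\eta\nu$ on $S(f)$.
\item $\eta\nu\perp\hat f_t,\hat f_r$ on $S(f)$. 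To see this, apply $\eta$ to $\inner{\nu}{\xi f}=0$ with $\xi$ tangent to $S(f)$. This gives $\inner{\eta\nu}{\xi f}=-\inner{\nu}{\xi(\eta f)}$, which vanishes on $S(f)$ because $\eta f=0$ there.
\end{enumerate}
Since ${}^tW^{-1}$ maps $\langle\hat f_t,\hat f_r\rangle_{\R}^{\perp}$ isomorphically onto $\langle W\hat f_t,W\hat f_r\rangle_{\R}^{\perp}$, the transformed determinant is then a nonvanishing multiple of $\psi$. The paper carries out this computation in the frame $\hat f_t,\hat f_r,\nu,\eta\nu/k$.

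A smaller correction in the same step: a vector field vanishing on $S(f)$ contributes nothing at all to $\eta\nu$ along $S(f)$, not ``$df$-directions''.
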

\begin{proof}
	We first see the condition does not depend on the choice
	of identifier of singularities,
	normal vector,
	parametrization of $S(f)$,
	null vector field and 
	coordinate system
	on the target space.
	It is clear that the independence of the choice of other than
	coordinate system
	on the target space,
	since replacing them only multiplies $\psi$ 
	by a non-vanishing function.
	For the choice of coordinate system on the target space,
	it is enough to show the following lemma.
	\begin{lemma}
		Under the above setting, the function\/ $\psi$
		is multiplied by a non-zero function 
		when the coordinates on the target space are replaced
		by a diffeomorphism.
	\end{lemma}
	\begin{proof}
		We take a diffeomorphism $\Phi$ on the target space.
		We set $\tilde f=\Phi\circ f$, and $\tilde \nu$ a normal vector of $\tilde f$.
		We set 
		$\tilde\psi=\det(\tilde f_t,\tilde f_r,\tilde\nu,\eta\tilde\nu)(\Sigma(t,r))$.We show $\tilde\psi$ is a non-zero functional
		multiplication of $\psi$.
		The differential $d\Phi$ can be regarded as a $GL(4,\R)$-valued
		map $W:(\R^3,0)\to GL(4,\R)$.
		Then a normal vector of $\tilde f$ is $\tilde \nu={}^tW^{-1}\nu$.
		Since $\eta$ is a null vector field, it holds that
		$$\tilde\psi=\det(Wf_t,Wf_r,{}^tW^{-1}\nu,{}^tW^{-1}\eta\nu)(\Sigma(t,r)).$$
		
		Since the assertion does not depend on the choice of
		parametrization of $S(f)$ and choice of $\nu$,
		we assume 
		$\hat f_t$, 
		$\hat f_r$ is an orthonormal frame on $S(f)$.
		Then 
		$\hat f_t$, 
		$\hat f_r$, $\nu$  is an orthonormal frame on $S(f)$,
		and $\eta\nu$ is perpendicular to these vectors.
		Taking an orthonormal frame of $T_p\R^4$ on $p\in S(f)$
		$\{e_1,\ldots,e_4\}$
		satisfying
		$\hat f_t=e_1$,
		$\hat f_r=e_2$,
		$\nu=e_3$,
		$\eta\nu=ke_4$.
		Then $\psi$ represented by this frame is $\psi=k$.
		By a direct calculation,
		we see
		$$
		\tilde\psi
		=
		\big(\inner{We_3}{We_3}\inner{We_4}{We_4}-\inner{We_3}{We_4}^2\big)
		\det W \psi,
		$$
		and this shows the assertion.
	\end{proof}
	We back to the proof of Theorem \ref{lem:ccri}.
	By the condition, we see $\rank df_0=2$, by a coordinate change on the source space,
 one can write
	$f$ as $(t,r,f_3(t,r,w),f_4(t,r,w))$.
	Taking $S(f)=\{w=0\}$ and null vector field is $\partial_w$
	by a coordinate change on the source space again,
	with a coordinate change on the target space,
	we can write $f$ as $(t,r,w^2f_3(t,r,w),w^2f_4(t,r,w))$.
	By the condition $\eta\lambda\ne0$, we have $(f_3,f_4)\ne(0,0)$ 
	at $0$,
	we may assume $f_3\ne0$.
	Moreover, we may assume $f_3>0$.
	We set $w=\sqrt{f_3}$.
	Then 
	one can write
	$f$ as $(t,r,w^2,w^2f_4(t,r,w))$.
	Since it is written as
	$(t,r,w^2,w^2(f_{41}(t,r,w^2)+wf_{42}(t,r,w^2))$,
	one can write
	$f$ as $(t,r,w^2,w^3f_4(t,r,w^2))$.
	Since the condition does not depend on the choice of
	adapted triple of vector fields and
	coordinate system on the target space,
	we set 
	$\xi_1=\partial_t$,
	$\xi_2=\partial_r$,
	$\eta=\partial_w$.
	Then $\psi$ is a non-zero multiple of $f_4$.
	Thus $((f_4)_t,(f_4)_r)\ne(0,0)$ holds.
	So we may assume $(f_4)_r\ne0$.
	Setting $\tilde r=f_4(t,r,w^2)$, we see $r$ can be written
	as
	$r=f_5(t,\tilde r,w^2)$.
	Thus $f$ is $\A$-equivalent to
	$(t,f_5(t,r,w^2),w^2,rw^3)$.
	Let us set 
	$f_0=(t,r,w^2,rw^3)$, and
	$\Phi(X_1,\ldots,X_4)=(X_1,f_5(X_1,X_2,X_3),X_3,X_4)$.
	Then $f=\Phi\circ f_0$ holds, and we see the assertion.
\end{proof}
Let $0$ be a non-degenerate singular point of a frontal $f$.
A triple of vector fields $(\xi_1,\xi_2,\eta)$ is said to be
{\it adapted\/} if $\xi_1,\xi_2$ are linearly independent and
tangent to $S(f)$, and $\eta$ is a null vector field.
We set 
\begin{equation}\label{eq:defpsi}
	\bar\psi=\det(\xi_1f,\ \xi_2f,\ \nu,\ \eta\nu),
\end{equation}
where $\zeta h$ stands for the directional derivative of a
function $h$ by a vector field $\zeta$.
Since the direction of $\eta$ is unique on the set of singular points,
we obtain the following corollary:
\begin{corollary}\label{cor:ccri}
	Let\/ $f:(\R^3,0)\to(\R^4,0)$ be a frontal with\/ $\eta\lambda(0)\ne0$.
	Let\/ $\lambda$ be an identifier of singularities
	and,\/ $\eta$ be a null vector field.
	We assume\/ $\eta\lambda\ne0$. 
	Let\/ $(\xi_1,\xi_2,\eta)$ be an adapted triple of vector fields.
	Then\/ $f$ is a cuspidal cross cap\/ $\times$ interval if and only if\/ 
	$\bar\psi=0$ and\/
	$(\xi_1\bar\psi,\xi_2\bar\psi)\ne(0,0)$ at\/ $0$,
	where\/ $(\xi_1,\xi_2,\eta)$ is an adapted triple of vector fields,
	and\/ $\bar\psi$ is the function defined in\/ \eqref{eq:defpsi}.
\end{corollary}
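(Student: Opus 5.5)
This follows from Theorem~\ref{lem:ccri}: I will show that, on the singular set, the function $\psi$ of \eqref{eq:defpsi1} and the function $\bar\psi$ of \eqref{eq:defpsi} differ only by a non-vanishing factor, and that the two vanishing/non-degeneracy conditions correspond.

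First, since $\eta\lambda(0)\ne0$, the point $0$ is a non-degenerate singular point. Hence $S(f)$ is a regular surface near $0$ and $\eta$ is transversal to it. Take any adapted triple $(\xi_1,\xi_2,\eta)$. Choose a parametrization $\Sigma(t,r)$ of $S(f)$ with $\Sigma_t,\Sigma_r$ a basis of $T S(f)$. Along $S(f)$ we can write
\[
\xi_i=c_{i1}\Sigma_t+c_{i2}\Sigma_r
\]
with $\det(c_{ij})\ne0$, since $\xi_1,\xi_2$ are linearly independent and tangent to $S(f)$. Then at points of $S(f)$ we have $\xi_i f=c_{i1}\hat f_t+c_{i2}\hat f_r$. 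Multilinearity of the determinant gives
\[
\bar\psi\circ\Sigma=\det(c_{ij})\,\psi .
\]
One point needs care: $\eta\nu$ in $\bar\psi$ is evaluated at points of $S(f)$, which matches $(\eta\nu)(\Sigma(t,r))$ in \eqref{eq:defpsi1}. Also, any other null vector field agrees with $\eta$ on $S(f)$ up to a non-zero function. This is the remark preceding the corollary, that the direction of $\eta$ is unique on $S(f)$.

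Second, I translate the conditions. The condition $\bar\psi(0)=0$ is equivalent to $\psi(0)=0$. Differentiating at $0$ in directions tangent to $S(f)$ gives
\[
\xi_i\bar\psi(0)=c_{i1}\psi_t(0)+c_{i2}\psi_r(0)+(\xi_i\det(c_{jk}))\,\psi(0).
\]
The last term vanishes because $\psi(0)=0$. Hence $(\xi_1\bar\psi,\xi_2\bar\psi)(0)\ne(0,0)$ if and only if $d\psi_0\ne0$, by invertibility of $(c_{ij})$.

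Finally I apply Theorem~\ref{lem:ccri}. I read its condition ``$\psi=0$'' as $\psi(0)=0$, as the proof of that theorem uses it. The main subtlety is exactly this: $\bar\psi$ is defined on a neighborhood, but only its restriction to $S(f)$ and its derivatives along $\xi_1,\xi_2$ (tangent to $S(f)$) enter. So I must make sure no derivative transverse to $S(f)$ is involved. This is why the criterion uses $\xi_i\bar\psi$ rather than $d\bar\psi_0$.
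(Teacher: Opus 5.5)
Your argument is correct and follows the paper's route: the paper gets the corollary from Theorem~\ref{lem:ccri} in one line, noting that $\eta$ is determined up to a non-zero factor on $S(f)$, so $\bar\psi$ on $S(f)$ is a non-vanishing multiple of $\psi$; you make this explicit via $\bar\psi\circ\Sigma=\det(c_{ij})\,\psi$ and the tangential-derivative computation. One small slip: in your formula for $\xi_i\bar\psi(0)$, the terms $c_{i1}\psi_t(0)+c_{i2}\psi_r(0)$ should carry the factor $\det(c_{jk})(0)$, which is harmless since that factor is non-zero.
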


See \cite{fsuy} a criteria for cuspidal cross cap, and
\cite{ss}
a parameter-free version of the singular curve.

\subsection{Singularities of pseudo-non-degenerate two-ruled hypersurface}
In this subsection, we describe the singularities of 
pseudo-non-degenerate two-ruled hypersurfaces using 
the functions $a(t),\delta(t),B(t)=(b_1(t),\ldots,b_4(t))$ defined in 
\eqref{eq:nondegp}.

Let $f(t,s,r)=\gamma(t)+sX(t)+rY(t)$ 
be a pseudo-non-degenerate two-ruled frontal.
\begin{lemma}
	The set of singular points of a
	a pseudo-non-degenerate two-ruled frontal\/ $f$ coincides with
	the image of the striction surface.
	The second singular set coincides with
	the image of the second striction curve.
\end{lemma}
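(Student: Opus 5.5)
Since $f$ is a pseudo-non-degenerate two-ruled frontal, we have $b_4\equiv0$, and \eqref{eq:difff} gives
$$
f_t\wedge f_s\wedge f_r=(b_3+s+ra)\,X\wedge Y\wedge X'.
$$
So with $\nu=X\wedge Y\wedge X'$ (a unit normal, since $\{X,Y,X',Z\}$ is orthonormal) an identifier of singularities is
$$
\lambda(t,s,r)=b_3(t)+s+a(t)r.
$$
Then $S(f)=\{s=-a(t)r-b_3(t)\}$, and this is exactly the function $s(t,r)$ of \eqref{eq:str1}. The image $f(S(f))$ is therefore $\{\gamma+s(t,r)X+rY\}=\image\tilde\sigma=\image\sigma_1$. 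This proves the first claim. Note also that $\lambda_s=1$, so every singular point is non-degenerate.

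For the second claim, we first find the null vector field. On $S(f)$ we have $f_t=b_1X+b_2Y$, so $df(\eta)=0$ for
$$
\eta=\partial_t-b_1\partial_s-b_2\partial_r,
$$
which is the field already used in the frontness proposition. Since $\rank df=2$ there (because $f_s=X$ and $f_r=Y$), $\eta$ spans the kernel. A direct computation gives
$$
\eta\lambda=b_3'+a'r-b_1-ab_2=a'r-\omega,
$$
with $\omega$ as in \eqref{eq:omega}. Hence
$$
S_2(f)=\{\lambda=0,\ a'r=\omega\},
$$
and where $a'\ne0$ this is the curve $r=\omega/a'$, $s=-ar-b_3$.

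It remains to check that this curve is mapped onto the image of $\sigma_2$. Its image is $\tilde\gamma+r(-aX+Y)=\tilde\gamma+r(a^2+1)^{1/2}\tilde X$ with $r=\omega/a'$. This is exactly \eqref{eq:sigma2}, which was derived from \eqref{eq:c'X'}. The main point needing care is that the lemma implicitly assumes $a'\ne0$, which is needed for the second striction curve to be defined; I would state this assumption explicitly. I would also note that $S_2(f)$ is defined through the null direction on $S(f)$, so it does not depend on the choice of $\eta$, and that the identification with the second striction curve is an identification of images under $f$.
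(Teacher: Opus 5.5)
Your proof is correct and follows the paper's argument essentially verbatim. Like the paper, you take $\lambda=b_3+s+ra$ and $\eta=\partial_t-b_1\partial_s-b_2\partial_r$ from \eqref{eq:difff}, compute $\eta\lambda=ra'-\omega$, and match $f(S_2(f))=\tilde\gamma+\omega(a^2+1)^{1/2}(a')^{-1}\tilde X$ with \eqref{eq:sigma2}. Your explicit caveats are sensible clarifications: that $a'\ne0$ is needed (the paper divides by $a'$ without comment), and that the coincidence is one of images under $f$.
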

\begin{proof}
	By \eqref{eq:difff}, we see
	the identifier of singularities can be taken as
	$\lambda(t,s,r)=b_3(t)+s+ra(t)$,
	and the null vector field can be taken as
	$\eta=\partial_t-b_1\partial_s-b_2\partial_r$. 
	The singular set is $S(f)=\{(t,s,r)\,|\,b_3(t)+s+ra(t)=0\}$ and 
	the second singular set is 
	$$
	S_2(f)=\{p\in S(f)\,|\, ra'-\omega=0\}
	=\{(t,s,r)\,|\,r=\omega/a',\ 
	s=-b_3-a\omega/a'\},$$
	where $\omega$ is defined in \eqref{eq:omega}.
	Thus $f(S_2(f))$ is parameterized by
	$$
	\gamma-b_3X+
	\dfrac{\omega}{a'}(-aX+Y)
	=
	\tilde \gamma+
	\dfrac{\omega(a^2+1)^{1/2}}{a'}\tilde X,
	$$
	where $\tilde \gamma$ and $\tilde X$ are defined in \eqref{eq:tctx}.
	Comparing this with \eqref{eq:sigma2},
	we have the assertion.
\end{proof}
\begin{theorem}\label{thm:singcond}
	Let\/ $f=\gamma+sX+rY:(\R^3,(0,s,r))\to(\R^4,0)$ be a pseudo-non-degenerate 
	two-ruled frontal.
	Let\/ $p=(0,s,r)$ be a singular point of\/ $f$, namely,
	$b_3(t)+s+ra(t)=0$ holds.
	We assume\/ $f$ at\/ $p$ is a front, namely, $\delta(0)\ne0$.
	Then\/ $f$ at\/ $p$ is a cuspidal edge if and only if 
	\begin{equation}\label{eq:ce}
		r a'- a b_2 - b_1 + b_3'\ne0
	\end{equation}
	at\/ $t=0$ holds. 
	The germ\/ $f$ at\/ $p$ is a swallowtail if and only if\/
	$(A)$ or\/ $(B)$ holds, where
	\begin{enumerate}
		\item[$(A)$]
		$a'=0$, $- a b_2 - b_1 + b_3'=0$ and\/ $q_0\ne0$ at\/ $t=0$, where
		\begin{equation}\label{eq:sw0}
			q_0=- a b_2' + r a''  - 2 b_2 a' - b_1' + b_3''.
		\end{equation}
		\item[$(B)$]
		$r=(a b_2 + b_1 - b_3')/a'$ and\/ 
		$q_1\ne0$ at\/ $t=0$, where
		\begin{equation}\label{eq:sw}
			q_1=(b_2 a'' - a' b_2')
			+ (b_1 - b_3') a''
			- 2 b_2 (a')^2 - a' b_1' + a' b_3''.
		\end{equation}
	\end{enumerate}
	The germ\/ $f$ at\/ $p$ is a cuspidal butterfly if and only if\/
	$a'\ne0$, $r=(a b_2 + b_1 - b_3')/a'$, 
	$q_1=0$ and\/ $q_2\ne0$ at $t=0$, where
	\begin{equation}\label{eq:bt}
		q_2=a( b_2 a''' -  a' b_2'')
		+ a'''(b_1 - b_3')
		- 3 b_2 a' a'' - 3 (a')^2 b_2' + a' (-b_1'' + b_3''').
	\end{equation}
\end{theorem}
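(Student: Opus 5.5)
The plan is to apply the criteria of Lemma \ref{lem:ak} directly, using the explicit data obtained in the proof of the preceding lemma. Since $b_4\equiv0$, $f$ is a frontal with unit normal $\nu=X\wedge Y\wedge X'$. By \eqref{eq:difff} we may take the identifier of singularities and the null vector field to be
\[
\lambda(t,s,r)=b_3(t)+s+ra(t),\qquad \eta=\partial_t-b_1\partial_s-b_2\partial_r .
\]
The rank condition $\rank df_p=2$ holds because $f_s=X$ and $f_r=Y$ are independent while $f_t$ lies in their span on $S(f)$. The front condition is already assumed: by the previous proposition, $X,Y,X',X''$ are linearly independent, and by \eqref{frenetxy} we have $X''=-X-aY+\delta Z$, so this condition is exactly $\delta\ne0$. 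Since $d\lambda$ has $\partial_s\lambda=1$, we get $\rank d\lambda=1$ everywhere, so every singular point is non-degenerate.

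The next step is to compute the iterated derivatives. A direct differentiation gives
\[
\eta\lambda=b_3'+ra'-b_1-ab_2=ra'-\omega .
\]
The first criterion of Lemma \ref{lem:ak} then gives the cuspidal edge condition \eqref{eq:ce} immediately.

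Applying $\eta$ again, and noting that $\eta$ acts on $r$ by $-b_2$ and on functions of $t$ by $d/dt$, we obtain
\[
\eta\eta\lambda=ra''-b_2a'-\omega'=ra''-2b_2a'-ab_2'-b_1'+b_3''=q_0 .
\]
A third application gives
\[
\eta\eta\eta\lambda=ra'''-b_2a''-(2b_2a'+ab_2'+b_1'-b_3'')' .
\]

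For the swallowtail, the condition $\eta\lambda(0)=0$ means $ra'=\omega$ at $t=0$, and this naturally splits into two cases. If $a'(0)=0$, then we need $\omega(0)=0$ with $r$ free, and $\eta\eta\lambda=q_0$; this gives case (A). If $a'(0)\ne0$, then $r=\omega/a'$, and substituting into $\eta\eta\lambda$ and multiplying by $a'$ yields $q_1$; this gives case (B).

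The rank condition must also be checked. Write $d\lambda=(b_3'+ra',\,1,\,a)$ and $d(\eta\lambda)=(\ast,\,0,\,a')$ in the coordinates $(t,s,r)$.

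In case (B), $a'\ne0$ guarantees that $d\lambda$ and $d(\eta\lambda)$ are independent, because of the $s$- and $r$-entries.

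In case (A), we have $a'=0$, so we need the $t$-entry of $d(\eta\lambda)$ to be nonzero. That entry is $ra''-\omega'$, which equals $q_0$ when $a'=0$. So the rank condition holds automatically, because $q_0\ne0$.

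The main thing I expect to require care is precisely this bookkeeping in case (A): the rank condition must be seen to be implied by $q_0\ne0$ rather than being an extra hypothesis. The same issue must be resolved for the butterfly, where the statement assumes only $a'\ne0$.

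For the cuspidal butterfly, I will first show that case (A) cannot occur. Suppose $a'=0$, so that $\eta\lambda$ and $\eta\eta\lambda$ depend only on $t$ near the point. Then $d(\eta\lambda)$ and $d(\eta\eta\lambda)$ are both multiples of $dt$, so $\rank d(\lambda,\eta\lambda,\eta\eta\lambda)\le2$, which forces $a'\ne0$. With $r=\omega/a'$ and $q_1=0$, I would substitute into $\eta\eta\eta\lambda$, simplifying with the relation $q_1=0$ (that is, $\eta\eta\lambda=0$) and multiplying by a power of $a'$, to identify it as a nonzero multiple of $q_2$. The exact form of \eqref{eq:bt} is somewhat ad hoc, so this is the computational heart of the proof.

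Finally, the rank-$3$ condition must be verified. The matrix of $d(\lambda,\eta\lambda,\eta\eta\lambda)$ has rows
\[
(\ast,1,a),\qquad (\ast,0,a'),\qquad (\partial_t(\eta\eta\lambda),\,0,\,a'') .
\]
Its determinant reduces to $a'\,\partial_t(\eta\eta\lambda)-a''\,\partial_t(\eta\lambda)$, up to terms vanishing under the hypotheses. I expect this to equal $\eta\eta\eta\lambda$ times a unit, using $\partial_r(\eta\eta\lambda)=a''$ and $\eta=\partial_t-b_1\partial_s-b_2\partial_r$, so that it is nonzero exactly when $q_2\ne0$. If the exact coincidence fails, I would instead show that the determinant is a nonzero combination of $q_2$ and the vanishing quantities $\eta\lambda$ and $\eta\eta\lambda$.
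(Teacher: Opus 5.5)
Your approach is the paper's: apply Lemma~\ref{lem:ak} with $\lambda=b_3+s+ra$ and $\eta=\partial_t-b_1\partial_s-b_2\partial_r$. Your formulas $\eta\lambda=ra'-\omega$, $\eta\eta\lambda=q_0$ and your expression for $\eta\eta\eta\lambda$ are correct. Your handling of the swallowtail rank condition is also right: it is automatic in case (B) and equivalent to $q_0\ne0$ in case (A). This is more careful than the paper, which does not discuss the rank conditions at all.

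\textbf{The gap.} One step fails as written: your exclusion of $a'(0)=0$ for the cuspidal butterfly. The hypothesis is only $a'(0)=0$, which does not make $\eta\lambda$ and $\eta\eta\lambda$ independent of $r$ near the point. If $a''(0)\ne0$, then $d(\eta\eta\lambda)_0$ has $dr$-component $a''(0)\ne0$, so it is not a multiple of $dt$. The correct reason uses $\partial_t(\eta\lambda)=\eta\eta\lambda+b_2a'$: then $a'(0)=0$ and $\eta\eta\lambda(0)=0$ force $d(\eta\lambda)_0=0$.

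\textbf{A cleaner fix.} The identity you were hoping for holds exactly, with no vanishing hypotheses. Since $\partial_t(\eta\eta\lambda)=\eta\eta\eta\lambda+b_2a''$, the determinant in the coordinates $(t,s,r)$ is
\[
\det d(\lambda,\eta\lambda,\eta\eta\lambda)
=a'\,\partial_t(\eta\eta\lambda)-a''\,\partial_t(\eta\lambda)
=a'\,\eta\eta\eta\lambda-a''\,\eta\eta\lambda .
\]
So when $\eta\eta\lambda(0)=0$, the condition ``rank $3$ and $\eta\eta\eta\lambda(0)\ne0$'' is equivalent to ``$a'(0)\ne0$ and $\eta\eta\eta\lambda(0)\ne0$''. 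This disposes of case (A) and the rank condition at once.

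\textbf{The final identifications.} These are exact and need no use of $q_1=0$. Substituting $r=\omega/a'$ gives $a'\,\eta\eta\eta\lambda=q_2$ identically. The same substitution gives
\[
a'\,\eta\eta\lambda=a(b_2a''-a'b_2')+(b_1-b_3')a''-2b_2(a')^2-a'b_1'+a'b_3'' .
\]
The printed \eqref{eq:sw} lacks the factor $a$ on its first term; compare the first term of \eqref{eq:bt}. So you should not expect to reproduce \eqref{eq:sw} literally.
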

\begin{proof}
	Since the identifier of singularities $\lambda$ is
	$\lambda=b_3+s+ra$, and the null vector field
	$\eta$ is $\eta=\partial_t-b_1\partial_s-b_2\partial_r$. 
	We can directly apply Lemma \ref{lem:ak},
	and obtain the assertion.
	In the case of the cuspidal butterfly, we solve the equation 
	obtained by setting the left-hand side of \eqref{eq:sw} to zero, 
	substitute the solution into the condition $\eta\eta\eta\lambda\ne0$, 
	and obtain \eqref{eq:bt}.
\end{proof}
The case that $f$ is not a frontal,
namely, the case of $b_4\ne0$, we have the following.
\begin{theorem}\label{thm:singcondwu}
	Let\/ $f:(\R^3,0)\to(\R^4,0)$ be a 
	pseudo-non-degenerate two-ruled hypersurface.
	Then\/ $f$ is a Whitney umbrella\/ $\times$ interval
	if and only if\/ $b_4\neq0$ and\/ $b_4'\neq0$.
\end{theorem}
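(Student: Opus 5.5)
The plan is to read the statement at a singular point $p=(0,s_0,r_0)$ of $f$, with $b_4\neq0$ taken to mean that $b_4$ does not vanish identically, so that $f$ is not a pseudo-non-degenerate two-ruled frontal in the sense of Proposition \ref{prop:frontal}. The condition $b_4'\neq0$ is taken at $t=0$. I will prove both implications by reducing to the Whitney umbrella criterion stated earlier, which holds in coordinates where the $t$-derivative vanishes at the point.

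\emph{The singular point.} By \eqref{eq:difff},
\[
f_t\wedge f_s\wedge f_r=(b_3+s+ra)\,X\wedge Y\wedge X'+b_4\,X\wedge Y\wedge Z .
\]
The two triple products are linearly independent, so $p$ is singular exactly when $b_3+s_0+r_0a=0$ and $b_4(0)=0$. At such a point $f_t=b_1X+b_2Y\neq0$ in general. Hence the criterion cannot be applied directly in $(t,s,r)$, and I first straighten the null direction.

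\emph{Change of coordinates.} Take $B_i$ with $B_i'=b_i$ and $B_i(0)=0$ for $i=1,2$, and set
\[
\phi(u,\sigma,\rho)=\big(u,\ \sigma-B_1(u),\ \rho-B_2(u)\big),\qquad g=f\circ\phi ,
\]
with $(u,\sigma,\rho)$ centred at $(0,s_0,r_0)$. Then $g_u=f_t-b_1f_s-b_2f_r=\lambda X'+b_4Z$, where $\lambda=b_3+s+ra$ is evaluated along $\phi$. We also have $g_\sigma=X$ and $g_\rho=Y$, and $g_u(p)=0$, so the lemma applies to $g$. Differentiating and using $\lambda(p)=b_4(0)=0$ gives, at $p$,
\[
g_{\sigma u}=X',\qquad g_{\rho u}=aX',\qquad g_{uu}=(\eta\lambda)X'+b_4'Z ,
\]
with $\eta\lambda=ra'-ab_2-b_1+b_3'$. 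This leads to
\[
\det(g_\sigma,g_\rho,g_{\sigma u},g_{uu})=b_4'(0)\det(X,Y,X',Z)=b_4'(0),\qquad \det(g_\sigma,g_\rho,g_{\rho u},g_{uu})=a(0)\,b_4'(0).
\]
So the disjunction in the lemma holds if and only if $b_4'(0)\neq0$.

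\emph{Both directions.} For sufficiency, $b_4'(0)\neq0$ gives a Whitney umbrella $\times$ interval at $p$ by the lemma. For necessity, suppose $f$ is a Whitney umbrella $\times$ interval. The computation above forces $b_4'(0)\neq0$. Independently, a Whitney umbrella $\times$ interval is not a frontal, so Proposition \ref{prop:frontal}(2) excludes $b_4\equiv0$, which gives $b_4\neq0$. The hypothesis $b_4\neq0$ is in fact implied by $b_4'(0)\neq0$, so the two conditions are consistent. Still, both are recovered in the necessity direction exactly as the statement lists them.

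The main obstacle is the second-derivative computation: one must track that every term multiplying $\lambda$ or $b_4$ vanishes at $p$. If that bookkeeping turns out messy, I would instead check the determinants directly in the original coordinates using $\eta$ and $\eta\eta f$.
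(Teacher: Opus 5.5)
Your proof is correct and follows the paper's route: both apply the Whitney umbrella $\times$ interval criterion and find the two determinants to be $\pm b_4'$ and $\pm a\,b_4'$. Two harmless slips do not affect the conclusion: with the paper's orientation $\det(X,Y,X',Z)=-1$ rather than $1$, and the paper's proof assigns these two values to the opposite determinants. Your explicit straightening $g=f\circ\phi$ makes $g_u(p)=0$ at a singular point where $b_1,b_2$ need not vanish, and so spells out what the paper compresses into the phrase ``when $f_t=0$''.
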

\begin{proof}
	Let $f$ be a pseudo-non-degenerate two-ruled hypersurface.
	By \eqref{eq:difff}, we know that
	$\det(f_s,f_r,f_{st},f_{tt})=ab_4'$ and $\det(f_s,f_r,f_{rt},f_{tt})=b_4'$,
	when $f_t=0$. Thus we obtain the result.
\end{proof}

The case that $f$ is a frontal but not a front,
we have the following.
\begin{theorem}\label{thm:singcond2}
	Let\/ $f:(\R^3,0)\to(\R^4,0)$ be a pseudo-non-degenerate 
	two-ruled frontal. 
	Let\/ $p=(t,s,r)$ be a singular point of\/ $f$, namely,
	$b_3(t)+s+ra(t)=0$ holds.
	Then\/ $f$ is a cuspidal cross cap\/ $\times$ interval
	if and only if\/ $\delta=0$ and\/ $\delta'\ne0$.
\end{theorem}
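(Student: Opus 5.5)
We apply Corollary~\ref{cor:ccri} to the frontal $f$ with unit normal $\nu=X\wedge Y\wedge X'$. By \eqref{eq:difff} with $b_4\equiv0$, we take the identifier of singularities $\lambda=b_3+s+ra$ and the null vector field $\eta=\partial_t-b_1\partial_s-b_2\partial_r$, as in the proof of the lemma on singular sets. Then $\eta\lambda=b_3'-b_1-b_2a+ra'+\partial_s\lambda\cdot 0$; more carefully, $\eta\lambda=b_3'+ra'-b_1-ab_2$. The hypothesis $\eta\lambda\neq0$ is not automatic, so I first check what it means. A cuspidal cross cap $\times$ interval has singular set transverse to the null direction, so $\eta\lambda(0)\ne0$ is necessary. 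It is precisely condition \eqref{eq:ce}, so I will carry it as a standing assumption of the criterion. It is also implicit in the intended statement, just as in Theorem~\ref{thm:singcond}.

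Next I choose an adapted triple. $S(f)=\{s=-b_3-ra\}$ is parametrized by $(t,r)$, and its tangent fields are $\xi_1=\partial_t-(b_3'+ra')\partial_s$ and $\xi_2=\partial_r-a\partial_s$. Using \eqref{eq:difff} on $S(f)$, where $f_t=b_1X+b_2Y$, we get
\[
\xi_1f=(b_1-b_3'-ra')X+b_2Y,\qquad \xi_2 f=Y-aX .
\]
For the normal we have $\eta\nu=X\wedge Y\wedge X''$, as computed in the proof of the front criterion. By the Frenet formulas \eqref{frenetxy}, $X''=-X-aY+\delta Z$. Since $X\wedge Y\wedge X=X\wedge Y\wedge Y=0$, this gives $\eta\nu=\delta\,X\wedge Y\wedge Z$.

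Now I compute $\bar\psi=\det(\xi_1f,\xi_2f,\nu,\eta\nu)$. The vectors $\nu=X\wedge Y\wedge X'$ and $X\wedge Y\wedge Z$ are orthogonal to $X$ and $Y$, and, up to sign, they are the unit vectors $\pm Z$ and $\pm X'$. Hence
\[
\bar\psi=\pm\delta\det\pmt{b_1-b_3'-ra' & b_2\\ -a & 1}=\pm\delta\,(b_1-b_3'-ra'+ab_2)=\mp\delta\,\eta\lambda .
\]
Since $\eta\lambda(0)\ne0$, the condition $\bar\psi(0)=0$ is equivalent to $\delta(0)=0$. Moreover, $\xi_2\bar\psi=\mp\delta\,\xi_2(\eta\lambda)$ vanishes when $\delta=0$. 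On the other hand, $\xi_1\bar\psi=\mp\delta'\eta\lambda$ at that point, because the term $\delta\,\xi_1(\eta\lambda)$ vanishes there. So the condition $(\xi_1\bar\psi,\xi_2\bar\psi)\ne(0,0)$ reduces to $\delta'(0)\ne0$. Corollary~\ref{cor:ccri} then gives the assertion.

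The main obstacle is bookkeeping. I must verify that $\eta\nu$ is computed correctly off the singular set, or else argue that only its values on $S(f)$ matter, which the construction of $\psi$ in \eqref{eq:defpsi1} allows. I must also handle the implicit requirement $\eta\lambda(0)\neq0$, and check the sign and normalization of the triple products so that $\bar\psi$ is indeed a nonvanishing multiple of $\delta\,\eta\lambda$.
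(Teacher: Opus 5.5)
Your proposal is correct and follows the paper's own route. You use the same adapted triple $\xi_1=\partial_t-(b_3'+ra')\partial_s$, $\xi_2=\partial_r-a\partial_s$, $\eta=\partial_t-b_1\partial_s-b_2\partial_r$ and feed it into Corollary~\ref{cor:ccri}. Your computation $\bar\psi=\mp\delta\,\eta\lambda$ is more precise than the paper's ``$\bar\psi=\delta$'', which is only true up to the nonvanishing factor $\eta\lambda$. You are also right that the hypothesis $\eta\lambda(0)\ne0$, i.e.\ \eqref{eq:ce}, has to be added to the statement, since the paper uses it only implicitly through the corollary.
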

\begin{proof}
	Since $S(f)=\{b_3+s+ra\}$, we take
	$$
	\xi_1=\partial_t-(b_3'+r a')\partial_s,\quad
	\xi_2=-a\partial_s+\partial_r
	$$
	and $\eta$ as the above.
	By a direct calculation, we have
	$\bar\psi=\delta$.
	This shows the assertion.
\end{proof}
We give an example of pseudo-non-degenerate two-ruled
frontal who has a cuspidal cross cap $\times$ interval.
\begin{example}
	Let us set
	$f(t,s,r)=\gamma(t)+sX(t)+rY(t)$,
	where
	\begin{align*}
		\gamma(t)
		&=\Bigl(
		0,\,
		\int (\cos t-t^4\sin t)\,dt-t^4\cos t,\,
		\int (\sin t+t^4\cos t)\,dt-t^4\sin t,\\
		&\qquad
		\int (\sin t+t+t^4(\cos t+1))\,dt-t^4(\sin t+t)
		\Bigr),\\
		X(t)&=\bigl(0,\cos t,\sin t,\sin t+t\bigr),\\
		Y(t)&=\bigl(1,0,0,0\bigr).
	\end{align*}
	Then $f$ is a pseudo-non-degenerate two-ruled frontal.
	In fact, a unit normal vector field along $f$ is 
	\[
	\nu(t)
	=\dfrac{(0,t\cos t-\sin t,\,1+\cos t+t\sin t,\,-1)}{
		((t\cos t-\sin t)^2+(1+\cos t+t\sin t)^2+1)^{1/2}}.
	\]
	Moreover, the identifier of singularities and a null vector field of $f$ 
	can be taken by
	\begin{align*}
		\lambda(t,s,r)=\det(f_t,f_s,f_r,\nu)=s\quad\text{and}\quad
		\eta=\partial_t+(4t^3-1)\partial_s.
	\end{align*}
	Therefore, the singular set is
	$
	S(f)=\{(t,s,r)\mid s=0\}.
	$
	Hence, $\eta$ is a null vector field transverse to
	$S(f)$, and the singularities of $f$ are non-degenerate.
	A direct computation yields
	\[
	\psi(t,r)=\det\bigl(X,Y,X',X''\bigr)=-t.
	\]
	Therefore, the singularity of $f$ at $(0,0,r)$
	is a cuspidal cross cap $\times$ interval.
\end{example}

\subsection{Generic singularities of pseudo-non-degenerate two-ruled frontals}
In this section, we show that the generic singularities of
pseudo-non-degenerate two-ruled hypersurfaces
are
cuspidal edges, swallowtails, cuspidal butterflies or
cuspidal cross caps $\times$ an interval.
See \cite[Section 5]{sing2ruledhypersurface}
for generic singularities of non-degenerate two-ruled hypersurfaces.
Let $M$ be a one-dimensional manifold without boundary.
We set $PSF=\{d=(a,\delta,b_1,b_2,b_3)\in C^\infty(M,\R^5)\}$
to be the space
of pseudo-non-degenerate two-ruled frontals
equipped with the Whitney $C^\infty$ topology,
since such a frontal is
determined by these data, see Section \ref{sec:basic}.
\begin{theorem}
	There exists a dense subset\/ $\OO\subset PSF$ such that
	for any\/ $d=$ $(a$, $\delta$, $b_1$, $b_2$, $b_3)\in \OO$,
	the pseudo-non-degenerate two-ruled frontal\/ $f$
	determined by\/ $d$ 
	at any\/ $(t,s,r)\in M\times\R^2$ 
	is
	regular, cuspidal edge, swallowtail, cuspidal butterfly or
	cuspidal cross cap\/ $\times$ interval.
\end{theorem}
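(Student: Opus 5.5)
Our plan is to use a jet-transversality argument on the data $d=(a,\delta,b_1,b_2,b_3)$, combined with the criteria already established: Theorem~\ref{thm:singcond} for fronts and Theorem~\ref{thm:singcond2} for non-fronts. The first observation is that, by the preceding results, whether a singular point $p=(t,s,r)$ is one of the listed types depends only on finitely many derivatives of $d$ at $t$ and on the value $r$. The coordinate $s$ is then fixed by $s=-b_3-ra$. We therefore work with the multijet or ordinary jet extension $j^kd:M\to J^k(M,\R^5)$ with $k=3$ or $4$, and add the extra parameter $r\in\R$. Concretely, we consider the map
\[
\Theta_d: M\times\R\to J^k(M,\R^5)\times\R,\qquad (t,r)\mapsto (j^kd(t),r),
\]
so the relevant parameter space $M\times\R$ is two-dimensional.

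Next we list the bad sets, each a semialgebraic (in fact algebraic) subset $\Sigma\subset J^k(M,\R^5)\times\R$.
\begin{itemize}
\item $\Sigma_1=\{\delta=\delta'=0\}$, which has codimension $2$. Off $\Sigma_1$, a non-front point ($\delta=0$) with $\delta'\ne0$ is a cuspidal cross cap $\times$ interval by Theorem~\ref{thm:singcond2}.
\item At front points ($\delta\ne0$), write $c(r)=ra'-ab_2-b_1+b_3'$. Failure of cuspidal edge means $c=0$.
\item The set $\{a'=0,\ -ab_2-b_1+b_3'=0\}$ has codimension $2$ in the jet space. Thus $\Sigma_A=\{a'=0,\ ab_2+b_1-b_3'=0\}$ is avoided by $j^kd$ itself, since $\dim M=1$. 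This removes case $(A)$ and its degenerations entirely.
\item When $a'\ne0$, the conditions $c=0$, $q_1=0$, $q_2=0$ are three independent equations on $(j^4d,r)$. The $r$-equation fixes $r$, while $q_1$ and $q_2$ involve $b_3''$ and $b_3'''$ linearly with coefficient $a'\ne0$, so they are independent. The set $\Sigma_3=\{a'\ne0,\ c=q_1=q_2=0\}$ therefore has codimension $3$, which exceeds $\dim(M\times\R)=2$.
\end{itemize}

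By the Thom jet transversality theorem, the set $\OO$ of $d$ for which $j^kd$ is transverse to all strata of $\Sigma_1$ and $\Sigma_A$ is residual, hence dense, in the Whitney $C^\infty$ topology. For such $d$, $j^kd$ misses $\Sigma_1$ and $\Sigma_A$ because their codimension is $2>1$. For $\Sigma_3$ we note that $\Theta_d$ is transverse to $\Sigma_3$ exactly when $j^kd$ is transverse to the projection of $\Sigma_3$ after eliminating $r$. Since $r=(ab_2+b_1-b_3')/a'$ is solved explicitly, this projection is $\{q_1=q_2=0\}$ with $r$ substituted, which has codimension $2$ in the jet space and is again avoided.

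Finally we verify the rank conditions in Lemma~\ref{lem:ak}, namely $\rank d(\lambda,\eta\lambda)=2$ for swallowtails and $\rank d(\lambda,\eta\lambda,\eta\eta\lambda)=3$ for cuspidal butterflies. These are automatic here, because $\partial_s\lambda=1$ and $\partial_r(\eta\lambda)=a'\ne0$. For $\eta\eta\lambda$, its $t$-derivative involves $b_3'''$ independently, and degeneracy of this rank would be a further codimension $\ge3$ condition that we add to the transversality list. With all of this, every point of $M\times\R^2$ is regular or one of the listed types.

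The main obstacle is checking that the defining equations of each bad set are genuinely independent in jet space. This is most delicate for the cuspidal butterfly rank condition, where the dependence on $b_3'''$ and $b_3''''$ must be exhibited explicitly to make the codimension count honest.
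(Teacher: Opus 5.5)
Your plan is essentially the paper's own argument: Thom transversality for the jets of $d$, discarding $\{\delta=\delta'=0\}$, $\{a'=\omega=0\}$ and $\{q_1=q_2=0\}$, then reading off Theorems~\ref{thm:singcond} and~\ref{thm:singcond2}. Your refinements are sound. Showing independence of $q_1,q_2$ through the terms $a'b_3''$ and $a'b_3'''$ is cleaner than the paper's common-factor argument. The butterfly rank condition is in fact automatic: for $a'\ne0$ the field $\eta$ spans $\ker d\lambda\cap\ker d(\eta\lambda)$ at such a point, so $\rank d(\lambda,\eta\lambda,\eta\eta\lambda)=3$ iff $\eta\eta\eta\lambda\ne0$.

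There is, however, a genuine gap, which your proof shares with the paper's (the paper's $Q_3$ is your $\Sigma_1$). It is the step ``off $\Sigma_1$, a non-front point with $\delta'\ne0$ is a cuspidal cross cap $\times$ interval by Theorem~\ref{thm:singcond2}''. That theorem is proved through Corollary~\ref{cor:ccri}, which assumes $\eta\lambda\ne0$, and it is false without this hypothesis. Suppose $\delta(t_0)=0$ and $a'(t_0)\ne0$. Then every point $(t_0,-b_3(t_0)-ra(t_0),r)$ is a non-front singular point, and at $r_0=\omega(t_0)/a'(t_0)$ one has $\eta\lambda=r_0a'-\omega=0$. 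Since $\partial_s\lambda=1$, this says $\ker df_p=\langle\eta_p\rangle_{\R}\subset T_pS(f)$. Tangency of the kernel to the singular set is $\A$-invariant, whereas for $(t,s,r^2,sr^3)$ the kernel $\partial_r$ is transverse to $\{r=0\}$ everywhere. So this germ is not a cuspidal cross cap $\times$ interval. Being a non-front, it is not a cuspidal edge, swallowtail or cuspidal butterfly either.

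Transversality cannot remove these points. In your own set-up, $\{\delta=0,\ ra'-\omega=0\}\subset J^k(M,\R^5)\times\R$ has codimension exactly $2=\dim(M\times\R)$. Forgetting $r$, its image contains the codimension-one set $\{\delta=0,\ a'\ne0\}$. Hence $\Theta_d$ meets it for every $d$ in the Whitney-open set where $\delta$ has a simple zero at which $a'\ne0$. For example, $a=\delta=t$, $b_1=1$, $b_2=b_3=0$ gives such a point at $p=(0,0,1)$, where $\lambda=s+rt$ and $\eta=\partial_t-\partial_s$.

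Consequently the statement as written fails on a nonempty open subset of $PSF$, and no enlargement of your list of bad sets can rescue the argument. To repair it you would need to add the hypothesis $\eta\lambda\ne0$ to Theorem~\ref{thm:singcond2}, identify the germ occurring at these isolated points, and include it among the generic singularities.
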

\begin{proof}
	Let $J^3(M,\R^5)$ be the three-jet bundle.
	The coordinate system on $J^3(M,\R^5)$ is 
	$x=(t,d,d',d'',d''')$.
	We define three algebraic subsets of $J^3(M,\R^5)$ by
	\begin{align*}
		Q_1&=\{j^5d(t)\in J^3(M,\R^5)\,|\,a'(t)=- a(t) b_2(t) - b_1(t) + b_3'(t)=0\},\\
		Q_2&=\{j^5d(t)\in J^3(M,\R^5)\,|\,q_1(t)=q_2(t)=0\},\\
		Q_3&=\{j^5d(t)\in J^3(M,\R^5)\,|\,\delta(t)=\delta'(t)=0\},
	\end{align*}
	where $q_1$ and $q_2$ are defined in \eqref{eq:sw}
	and \eqref{eq:bt}.
	We show $q_1$ and $q_2$ do not have any common factor.
	We assume $q_1$ and $q_2$ have a common factor $k$.
	Since $q_2$ is linear in $a'''$, 
	if $k$ contains $a'''$, then $k$ cannot divide $q_1$.
	Thus $k$ does not contain $a'''$.
	Since $k$ divides $q_2$, it holds that $k$ divides both
	$q_3=a b_2 + b_1 - b_3'$ and $q_4=-  aa' b_2''
	- 3 b_2 a' a'' - 3 (a')^2 b_2' + a' (-b_1'' + b_3''')$.
	Since $q_3$ is linear in $b_3'$,
	and $q_4$ does not contain $b_3'$,
	by the same argument, we obtain $k$ must be a non-zero real number.
	Thus $q_1$ and $q_2$ do not have any common factor.
	So, $Q_1,Q_2,Q_3$ are algebraic subset with codimension greater than 
	or equal to $2$.
	Thus the jet extension $j^5d:M\to J^3(M,\R^5)$ of
	a map $d:M\to \R^5$ is transverse to $Q_i$ $(i=1,2,3)$
	if and only if $j^5d(M)\cap Q_1\cap Q_2\cap Q_3=\emptyset$.
	By Theorems \ref{thm:singcond} and \ref{thm:singcond2},
	the condition
	$j^5d(M)\cap Q_1\cap Q_2\cap Q_3=\emptyset$
	is equivalent to
	the conclusion.
	By the Thom jet-transversality theorem,
	the set $\OO=\{d\in C^\infty(M,\R^5)\,|\,
	j^5d\text{ is transverse to }Q_1,Q_2,Q_3\}$
	is dense.
	This shows the assertion.
\end{proof}
One can show the generic singularities of 
pseudo-non-degenerate two-ruled hypersurfaces
is Whitney umbrella $\times$ interval by the same argument
using Theorem \ref{thm:singcondwu}.

\section{Two-ruled hypersurfaces constructed from height functions}
\subsection{A Frenet-type frame along a curve}
\label{sub:frame}
Let $\gamma: I \to \R^4$ be a curve in $\R^4$.
We assume there exist a function $l$ and a unit vector
$\e_1$ such that $\gamma'(t) = l \e_1$ holds.
We also assume there exist a function $\kappa_1$ and a
unit vector $\e_2$ such that
$
\e_1'=\kappa_1\e_2$ holds.
We also assume there exist a function $\kappa_4$ and a
unit vector $\e_3$ such that
$\e_2'+\kappa_1\e_1=\kappa_4\e_3$ holds.
We set
$\e_4 = \e_1\wedge \e_2\wedge \e_3$.
Then we have the following Frenet-Serret type formulas:
\begin{equation}\label{eq:frenet}
	\begin{pmatrix}
		\boldsymbol{e}_1\\
		\boldsymbol{e}_2\\
		\boldsymbol{e}_3\\
		\boldsymbol{e}_4\\
	\end{pmatrix}'
	= 
	\begin{pmatrix}
		0 & \kappa_1 & 0 & 0 \\
		-\kappa_1 & 0 & \kappa_4 & 0 \\
		0 & -\kappa_4 & 0 & \kappa_6 \\
		0 & 0 & -\kappa_6 & 0 \\
	\end{pmatrix}
	\begin{pmatrix}
		\boldsymbol{e}_1\\
		\boldsymbol{e}_2\\
		\boldsymbol{e}_3\\
		\boldsymbol{e}_4\\
	\end{pmatrix}.
\end{equation}
We call the frame $\{\e_1, \e_2, \e_3, \e_4\}$ 
constructed in the above manner, the {\it Frenet-type frame}
along $\gamma$.
\begin{definition}(\cite{ishikawa})
	A curve-germ $\gamma: (\R,0)\to (\R^4,0)$ is said to have
	{\it finite osculation type} {\rm (}respectively,
	{\it pseudo-finite osculation type\/}{\rm )}
	if it holds that $\rank G=4$ 
	(respectively, $3$), where
	$$
	G=\left(
	\gamma'(0),\gamma''(0),\ldots,\gamma^{(k)}(0),\ldots\right).
	$$
\end{definition}
A function, or a vector-valued function $h:(\R,0)\to(\R^k,0)$ $(k\geq1)$ is
said to have multiplicity $m$ if $h'(0)=\cdots=h^{(m-1)}(0)=0$ and
$h^{(m)}(0)\ne0$.
If there exists $m$ such that $h$ has the multiplicity $m$,
then $h$ is said to have {\it finite multiplicity}.
We assume $\gamma: (\R,0)\to (\R^4,0)$ has pseudo-finite osculation type.
Then there exist 
a function $l$ and a unit vector $\e_1$ such that $\gamma'(t) = l \e_1$ holds.
If $\e_1$ does not have finite multiplicity, then the rank of $G$ is one.
Thus we have a function $\kappa_1$ and a
unit vector $\e_2$ such that
$\e_1'=\kappa_1\e_2$ holds.
If $\e_2'+\kappa_1\e_1$ does not have finite multiplicity,
then the rank of $G$ is two.
Thus we have a function $\kappa_4$ and a
unit vector $\e_3$ such that
$\e_2'+\kappa_1\e_1=\kappa_4\e_3$ holds.
Therefore, 
for a pseudo-finite osculation type curve $\gamma$,
there exists a Frenet-type frame $\{\e_1,\ldots,\e_4\}$,
where $\e_4=\e_1\wedge\e_2\wedge\e_3$.
If $\gamma$ satisfies 
$\rank(\gamma',\gamma'',\gamma''')=3$,
then the above terminology is usual Frenet-Serret type argument.
See \cite[page 44]{fre2}, or \cite{fre1}.
By the assumption, each function $l,\kappa_1,\kappa_4$ has
finite multiplicity,
and if $\gamma$ has 
finite osculation type, then $\kappa_6$ also have finite multiplicity.
If one takes $\e_2$ from $\e_1^\perp$ and $\e_3$ from 
$\langle\e_1,\e_2\rangle_{\R}^\perp$,
then the above procedure runs well.
In that case, the matrix appearing in the Frenet-Serret type formula
\eqref{eq:frenet}
has the form
$$
\begin{pmatrix}
	0 &  \kappa_1 &  \kappa_2 &  \kappa_3 \\
	-\kappa_1 & 0 &  \kappa_4 &  \kappa_5 \\
	-\kappa_2 & -\kappa_4 & 0 &  \kappa_6 \\
	-\kappa_3 & -\kappa_5 & -\kappa_6 & 0 \\
\end{pmatrix}.
$$
For simplicity, we adopt the former framing.

\subsection{Developable hypersurfaces along a curve}
Let $\gamma:(\R,0)\to(\R^4,0)$ be a pseudo-finite osculation type curve,
and let $\{\e_1, \e_2, \e_3, \e_4\}$ be the Frenet-type frame.
For a unit vector field $\boldsymbol{v}$ along $\gamma$, 
we define the function $H_{\boldsymbol{v}}:I \times \R^4 \to \R$ 
by
$$
H_{\boldsymbol{v}}(t, x) = 
\boldsymbol{v}(t) \cdot \big(x-\gamma(t)\big).
$$
This is called the {\it height function with respect to $\boldsymbol{v}$}.
Furthermore, let us set $h_{\boldsymbol{v}}(t) = H_{\boldsymbol{v}}(t, 0)$. 
The function $H_{\boldsymbol{v}}$ can be interpreted as 
a 4-parameter family of 1-variable functions.
For each $t \in I$, the set
$$
\mathcal{H}_{\boldsymbol{v}}=\{x \in \R^4 \mid H_{\boldsymbol{v}}(t, x) = 0\}
$$
is a hyperplane orthogonal to $\boldsymbol{v}$.
Thus $\mathcal{H}_{\boldsymbol{v}}$ is a one-parameter family of planes.
Consider the envelope of this family of planes
$$
\mathcal{D}_{\boldsymbol{v}} 
=
\{x \in \R^4 \mid \text{ there exists }t \in I \text{ such that } 
H_{\boldsymbol{v}}(t, x) = H_{\boldsymbol{v}}'(t, x) = 0\},
$$
where $'=\partial/\partial t$.
Let $i=1,2,3,4$, we consider the cases for
$\boldsymbol{v}
=\{\e_i\}$ with the assumption $\boldsymbol{v}'\ne0$.
Then we get parametrizations of four envelopes 
$\mathcal{D}_{\e_i}$ constructed from $\gamma$.
Here and after, we omit the variable $(t)$ of functions if it does
not occur any confusion.

\begin{lemma}\label{lem:sai}
	We set four two-ruled hypersurfaces defined by
	\begin{align}\label{eq:si}
		S_1&(t,s,r)
		=\gamma+\left(\frac{l}{\kappa_1}\right)\e_2+s\e_3+r\e_4,\quad\text{where\/}\ \kappa_1\neq0;\nonumber\\
		S_2&(t,s,r)
		=\gamma+s\e_4+r\left(\frac{\kappa_4\e_1 + \kappa_1\e_3}{\sqrt{\kappa_4^2+\kappa_1^2}}\right),\quad\text{where\/}\ (\kappa_1,\kappa_4)\ne(0,0);\\
		S_3&(t,s,r)
		=\gamma+s\e_1+r\left(\frac{\kappa_6\e_2 + \kappa_4\e_4}{\sqrt{\kappa_6^2+\kappa_4^2}}\right),\quad\text{where\/}\ (\kappa_4,\kappa_6)\neq(0,0);\nonumber\\
		S_4&(t,s,r)
		=\gamma+s\e_1+r\e_2,\quad\text{where\/}\ \kappa_6\neq0.\nonumber
	\end{align}
	Then the image of\/ $S_{i}$
	coincides with the set\/ $\mathcal{D}_{\e_i}$,
	where\/ $i=1,2,3,4$.
	Moreover, 
	if\/ $\kappa_4\ne0$, then\/ $S_1$ and\/ $S_4$ are pseudo-non-degenerate,
	if\/ $(\kappa_4, \kappa_1\kappa_4')\ne(0,0)$, 
	then\/ $S_2$ is pseudo-non-degenerate and
	if\/ $(\kappa_4, \kappa_4'\kappa_6)\ne(0,0)$, 
	then\/ $S_3$ is pseudo-non-degenerate.
\end{lemma}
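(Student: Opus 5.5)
The proof has two parts: computing each envelope $\mathcal{D}_{\e_i}$, and then checking pseudo-non-degeneracy of the resulting planes. For the envelope, I use $\gamma'=l\e_1$ and the Frenet-type formulas \eqref{eq:frenet} to compute
$$
H_{\e_i}(t,x)=\e_i\cdot(x-\gamma),\qquad
H'_{\e_i}(t,x)=\e_i'\cdot(x-\gamma)-l\,\e_i\cdot\e_1 .
$$
The envelope condition $H=H'=0$ is then two affine conditions on $x-\gamma(t)$ for each fixed $t$. When the two normals $\e_i$ and $\e_i'$ are independent, their solution set is an affine plane, and I describe it explicitly in the frame.

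The four cases go as follows.

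For $i=1$, $\e_1'=\kappa_1\e_2$. The conditions are $(x-\gamma)\cdot\e_1=0$ and $\kappa_1(x-\gamma)\cdot\e_2=l$. Since $\kappa_1\neq0$, the solutions are $x=\gamma+(l/\kappa_1)\e_2+s\e_3+r\e_4$, which is $S_1$.

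For $i=2$, $\e_2'=-\kappa_1\e_1+\kappa_4\e_3$ and $\e_2\cdot\e_1=0$. The conditions are $(x-\gamma)\perp\e_2$ and $(x-\gamma)\perp(-\kappa_1\e_1+\kappa_4\e_3)$. The solution plane is spanned by $\e_4$ and $\kappa_4\e_1+\kappa_1\e_3$, which gives $S_2$.

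For $i=3$, $\e_3'=-\kappa_4\e_2+\kappa_6\e_4$. Here $x-\gamma$ lies in the span of $\e_1$ and $\kappa_6\e_2+\kappa_4\e_4$, which gives $S_3$.

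For $i=4$, $\e_4'=-\kappa_6\e_3$. With $\kappa_6\ne0$, we get $x-\gamma\in\langle\e_1,\e_2\rangle_{\R}$, which gives $S_4$.

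In each case the stated nonvanishing hypothesis is exactly what makes $\e_i'\ne0$, so that $\e_i$ and $\e_i'$ are independent. Consequently the image of $S_i$ is precisely $\mathcal{D}_{\e_i}$.

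For pseudo-non-degeneracy I compute $\dim\langle X,Y,X',Y'\rangle_{\R}$, where $X,Y$ are the director curves. The inequality $\dim\le3$ is automatic in every case, because the ruling $P(t)$ is the orthogonal complement of $\langle\e_i,\e_i'\rangle_{\R}$. Differentiating $X\cdot\e_i=Y\cdot\e_i=0$ gives $X'\cdot\e_i=-X\cdot\e_i'=0$, and likewise for $Y'$. So $X,Y,X',Y'$ all lie in the three-space $\e_i^\perp$. It remains to show $\dim\ge3$, that is, that $X'$ or $Y'$ has a nonzero component along $\e_i'$. This is the same as $X\cdot\e_i''\ne0$ or $Y\cdot\e_i''\ne0$, i.e. $\e_i''\notin P^\perp$.

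The cases are as follows.

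For $S_1$, $\e_3'=-\kappa_4\e_2+\kappa_6\e_4$ has $\e_2$-component $-\kappa_4$, so $\kappa_4\ne0$ suffices.

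For $S_4$, $\e_1'$ and $\e_2'$ have components along $\e_3$ only through $\kappa_4$, so again $\kappa_4\ne0$ suffices.

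For $S_2$, set $N=\kappa_4\e_1+\kappa_1\e_3$ and take $X=\e_4$, whose derivative is $\e_4'=-\kappa_6\e_3$. I then compute the component of $(N/|N|)'$ along $\e_2'$, or equivalently the pairing of $N/|N|$ with $\e_2''$.

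For $S_3$ the computation is analogous, with $N=\kappa_6\e_2+\kappa_4\e_4$ and $X=\e_1$, where $\e_1'=\kappa_1\e_2$.

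The main obstacle is this last pair of determinant computations for $S_2$ and $S_3$. The aim is to show that the relevant $3\times3$ minor is a nonzero multiple of something that vanishes only when both $\kappa_4=0$ and $\kappa_1\kappa_4'=0$ (respectively $\kappa_4'\kappa_6=0$).

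For $S_2$ the first thing to try is $\e_2''\cdot N$. Using \eqref{eq:frenet}, $\e_2''=-\kappa_1'\e_1+\kappa_4'\e_3+(\text{terms in }\e_2,\e_4)$, and so
$$
\e_2''\cdot N=\kappa_1\kappa_4'-\kappa_1'\kappa_4 .
$$
When $\kappa_4=0$ this reduces to $\kappa_1\kappa_4'$. When $\kappa_4\ne0$ I examine $\e_4'=-\kappa_6\e_3$ together with $Y'$ directly.

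For $S_3$ I use $\e_3''\cdot N$, which gives $-\kappa_6\kappa_4'+\kappa_4\kappa_6'$ up to sign, and treat the cases $\kappa_4=0$ and $\kappa_4\ne0$ separately in the same way.
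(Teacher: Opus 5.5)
Your envelope computation is correct and is the paper's argument (the paper writes out only $i=4$). Your reduction of pseudo-non-degeneracy to $(X\cdot\e_i'',\,Y\cdot\e_i'')\neq(0,0)$, i.e.\ to linear independence of $\e_i,\e_i',\e_i''$, is also correct, and it settles $S_1$ and $S_4$ as you say.

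The gap is the subcase $\kappa_4\neq0$ for $S_2$ and $S_3$, which you leave as ``examine directly''. It cannot be completed, because the claim is false there. Finishing your own computation gives
$\e_2''=-\kappa_1'\e_1-(\kappa_1^2+\kappa_4^2)\e_2+\kappa_4'\e_3+\kappa_4\kappa_6\e_4$,
so $\e_4\cdot\e_2''=\kappa_4\kappa_6$ and $N\cdot\e_2''=\kappa_1\kappa_4'-\kappa_1'\kappa_4$. Hence $S_2$ is pseudo-non-degenerate at $t$ if and only if $(\kappa_4\kappa_6,\ \kappa_1\kappa_4'-\kappa_1'\kappa_4)\neq(0,0)$. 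Likewise
$\e_3''=\kappa_1\kappa_4\e_1-\kappa_4'\e_2-(\kappa_4^2+\kappa_6^2)\e_3+\kappa_6'\e_4$
gives the criterion $(\kappa_1\kappa_4,\ \kappa_4\kappa_6'-\kappa_4'\kappa_6)\neq(0,0)$ for $S_3$. Where $\kappa_4=0$, these reduce to the stated conditions $\kappa_1\kappa_4'\neq0$ and $\kappa_4'\kappa_6\neq0$. Where $\kappa_4\neq0$, however, nothing in the hypotheses forces $\kappa_6\neq0$ (resp.\ $\kappa_1\neq0$), nor the nonvanishing of the full expression $\kappa_1\kappa_4'-\kappa_1'\kappa_4$ (resp.\ $\kappa_4\kappa_6'-\kappa_4'\kappa_6$).

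Two concrete counterexamples (any $l$; the frame and curve exist by integrating the Frenet-type system):
\begin{itemize}
\item Take $\kappa_1\equiv1$, $\kappa_4=1+t^2$, $\kappa_6=t$. The hypotheses for $S_2$ hold. At $t=0$, however, $X_2'=-\kappa_6\e_3$ vanishes, and so does $Y_2'$, whose components are multiples of $\kappa_1\kappa_4'-\kappa_1'\kappa_4$ and $\kappa_1\kappa_6$. So $\dim\langle X_2,Y_2,X_2',Y_2'\rangle_{\R}=2$.
\item Take $\kappa_1=t$, $\kappa_4=\kappa_6\equiv1$. The hypotheses for $S_3$ hold, but at $t=0$ both $X_3'=\kappa_1\e_2$ and $Y_3'$ vanish.
\end{itemize}

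So the $S_2$, $S_3$ part of the statement needs an additional hypothesis (e.g.\ $\kappa_6\neq0$ for $S_2$, $\kappa_1\neq0$ for $S_3$), or should be replaced by the two criteria above. Your $\e_i''$ method proves that corrected version in a few lines, more cleanly than the paper's explicit formulas. The paper's proof does not close this gap either: it displays $X_2',Y_2',X_3',Y_3'$ and asserts pseudo-non-degeneracy under the stated conditions, while those same formulas exhibit these counterexamples.
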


\begin{proof}
	We show the case $\boldsymbol{v}=\e_4$.
	By the condition $H_{\e_4}(t, x) = 0$, 
	it holds that there exist $c_1,c_2,c_3 \in \R$ such that
	$x-\gamma = c_1 \e_1 + c_2 \e_2 +c_3 \e_4$.
	By \eqref{eq:frenet} and $\gamma'=l \e_1$.
	Moreover, substituting $x-\gamma$ into the formula
	$H_{\e_4}'(t, x) = 0$, we get
	\begin{align*}
		H_{\e_4}'(t, x) 
		= \e_4' \cdot \big(x-\gamma\big)
		= -\kappa_6\e_3 \cdot (c_1 \e_1 + c_2 \e_2 +c_3 \e_3)
		= -\kappa_6 c_3.
	\end{align*}
	Thus $c_3=0$ under the condition $\kappa_6\ne0$.
	Thus we set
	$x-\gamma 
	= s\e_1+r\e_2$, where $s=c_1\in\R$ and $r=c_2\in\R$.
	Hence the image of 
	$$
	x(t,s,r)
	=\gamma + s\e_1+r\e_2
	$$ coincides with 
	$\mathcal{D}_{\e_4}$.
	By differentiating director curves in \eqref{eq:si}, we have
	$\e_1'=\kappa_1\e_2$, $\e_2'=-\kappa_1\e_1+\kappa_4\e_3$.
	Thus $S_4$ is pseudo-non-degenerate if and only if $\kappa_4\ne0$.
	We can show the case of $i=1,2,3$ by a similar calculation.
	The director curves of $S_1$ and $S_4$ and their differential are given by
	\begin{align*}
		X_1=\e_3,
		\quad
		X_1'=-\kappa_4\e_2+\kappa_6\e_4
		\quad&\text{and}\quad
		Y_1=\e_4,
		\quad
		Y_1'=-\kappa_6\e_3;\\
		X_4=\e_1,
		\quad
		X_4'=\kappa_1\e_2
		\quad&\text{and}\quad
		Y_4=\e_2,
		\quad
		Y_4'=-\kappa_1\e_1+\kappa_4\e_3.
	\end{align*}
	Therefore, we see that if $\kappa_4\ne0$, 
	then $\dim\{X_1,Y_1,X_1'\}=\dim\{X_4,Y_4,X_4'\}=3$.
	The director curves of $S_2$ and $S_3$ and their differential are given by
	\begin{align*}
		&X_2=\e_4,
		\quad\quad\quad\quad\quad\quad\
		X_2'=-\kappa_6\e_3;\\
		&Y_2=\frac{\kappa_4\e_1 + \kappa_1\e_3}{\sqrt{\kappa_1^2+\kappa_4^2}},
		\quad\quad
		Y_2'=\frac{\kappa_1\kappa_4'-\kappa_1'\kappa_4}{(\kappa_1^2+\kappa_4^2)^{3/2}}\Big(\kappa_1\e_1-\kappa_4\e_3\Big)+\frac{\kappa_1\kappa_6}{\sqrt{\kappa_1^2+\kappa_4^2}}\e_4.\\
		&X_3=\e_1,
		\quad\quad\quad\quad\quad\quad\
		X_3'=\kappa_1\e_2;\\
		&Y_3=\frac{\kappa_6\e_2 + \kappa_4\e_4}{\sqrt{\kappa_4^2+\kappa_6^2}},
		\quad\quad
		Y_3'=\frac{\kappa_1\kappa_6}{\sqrt{\kappa_4^2+\kappa_6^2}}\e_1+\frac{\kappa_4\kappa_6'-\kappa_4'\kappa_6}{(\kappa_4^2+\kappa_6^2)^{3/2}}\Big(\kappa_4\e_2-\kappa_6\e_4\Big).
	\end{align*}
	Therefore, we see that if $(\kappa_1\kappa_4',\kappa_4)\neq(0,0)$ and $(\kappa_4,\kappa_4'\kappa_6)\neq(0,0)$, then $S_2$ and $S_3$ are pseudo-non-degenerate.
\end{proof}
By the above calculations, we see that
a unit normal vector for $S_i$ are $\e_i$ $(i=1,2,3,4)$,
thus, they are frontal.
Since each $\e_i$ is a curve, each two-ruled frontal
$S_i$ is developable.

\section{Singularities of the two-ruled frontals $S_i$ ($i=1,2,3,4$)}
In this section, we describe the conditions 
under which the frontals $S_i$ obtained
in \eqref{eq:si} exhibit the singularities introduced in Section \ref{sec:sing} expressed in terms of the functions $l,\kappa_1,\kappa_4,\kappa_6$
of $\gamma$.

\subsection{Properties of the frontal $S_1$}
We assume $\kappa_1\kappa_4\ne0$, and 
set $\hat{\gamma}=\gamma+l\e_2/\kappa_1$.
Then $S_1$ can be written as $S_1(t,s,r)=\hat{\gamma}+s\e_3+r\e_4$.
We set $X_1=\e_3$ and $Y_1=\e_4$.
The partial derivatives are
\begin{align}\label{eq:s1'}
	(S_1)_t=\Big(\frac{l'\kappa_1-l\kappa_1'}{\kappa_1^2}-s\kappa_4\Big)\e_2
	+\Big(\frac{l\kappa_4}{\kappa_1}-r\kappa_6\Big)\e_3+s\kappa_6\e_4,\ 
	(S_1)_s=X_1,\ 
	(S_1)_r=Y_1.
\end{align}
We see
\begin{equation}\label{eq:l1eta1}
	\lambda_1=- s \kappa_4\kappa_1^2 + l'\kappa_1 - l\kappa_1',\quad
	\eta_1=\partial_t-(l \kappa_4/\kappa_1-r \kappa_6)\partial_s-s\kappa_6\partial_r
\end{equation}
can be taken as an identifier of singularities and 
a null vector field of $S_1$.
Since the director curves are not constrictively adapted,
we modify them so that they become constrictively adapted.
Let $\theta$ be a function satisfying $\theta'=\kappa_6$,
and define
\begin{align*}
	\hat{X_1}=\cos\theta X_1-\sin\theta Y_1,
	\quad\quad\quad
	\hat{Y_1}=\sin\theta X_1+\cos\theta Y_1.
\end{align*}
Then $\hat{X_1}$ and $\hat{Y_1}$ are constrictively adapted.
We choose $\theta$ satisfying $\cos\theta\ne0$.
Since $|(\hat{X_1})'|=|\cos\theta\kappa_4|$, we set
$$
\tilde t=\int^t |\cos\theta\kappa_4|dt=\tau(t).
$$
Then
$\hat{X_1}(\tau^{-1}(\tilde t))$,
$\hat{Y_1}(\tau^{-1}(\tilde t))$ and
$d\hat{X_1}(\tau^{-1}(\tilde t))/d\tilde t$
form an orthonormal set.
The functions $a,\delta$ and $B=(b_1,\ldots,b_4)$ defined in \eqref{eq:nondegp}
are given by
\begin{equation}\label{eq:aB1}
	a=\dfrac{\sin\theta}{\cos\theta},\ 
	\delta=\dfrac{\kappa_1}{|\cos\theta\kappa_4|},\ 
	B=
	\left(
	\dfrac{l\kappa_4\cos\theta}{\kappa_1|\cos\theta\kappa_4|},\ 
	\dfrac{l\kappa_4\sin\theta}{\kappa_1|\cos\theta\kappa_4|},\ 
	\dfrac{l\kappa_1'-l'\kappa_1}{\kappa_1^2(\kappa_4\cos\theta)^3},\ 0
	\right).
\end{equation}
Here, we note that 
$dt/d\tilde t=1/|(\hat{X_1})'|=1/|\cos\theta\kappa_4|$.
From these data, we can derive the striction surface and the striction
curve of $S_1$, as well as the correponding conditions of singularities,
using
\eqref{eq:str1}, \eqref{eq:str2} and Theorem \ref{thm:singcond},
respectively.
If we omit the condition $\kappa_4\ne0$, the frontal $S_1$ is defined.
\begin{theorem}
We assume $\kappa_1\kappa_4\ne0$.
The frontal \/ $S_1$ is of cylinder type if and only if\/ $\kappa_6\equiv0$.
The frontal \/ $S_1$ is of cone type
if and only if\/ $f_{1cone}\equiv0$, where
\begin{align*}
f_{1cone}=&l\Big\{
\kappa_1^3\kappa_4^3\bigl( \kappa_6 \kappa_4' - \kappa_4 \kappa_6'\bigr) 
+ \kappa_1^2\bigl(
-\kappa_1'\kappa_4^4 \kappa_6 
-2 \kappa_1' \kappa_4'^2\kappa_6 
-\kappa_1'\kappa_4 \kappa_4'  \kappa_6'
+\kappa_1' \kappa_4\kappa_4'' \kappa_6  \\
&\hspace{10mm}
-\kappa_1'\kappa_4^2 \kappa_6^3 
+2 \kappa_1''\kappa_4 \kappa_4' \kappa_6
+\kappa_1''\kappa_4^2 \kappa_6' 
-\kappa_1'''\kappa_4^2 \kappa_6 
\bigr) \\[4pt]
&\hspace{10mm}
+ \kappa_1\bigl(
-4 \kappa_1'^2\kappa_4 \kappa_4' \kappa_6 
-2 \kappa_1'^2\kappa_4^2 \kappa_6' 
+6 \kappa_1' \kappa_1''\kappa_4^2 \kappa_6 
\bigr) 
- 6 \kappa_1'^3\kappa_4^2 \kappa_6 
\Big\}\\
&+l'\kappa_1\Big\{
\kappa_1^2\bigl(
\kappa_4^4 \kappa_6
+ \kappa_4^2 \kappa_6^3
+ 2 \kappa_6 \kappa_4'^2
+ \kappa_4 \kappa_4' \kappa_6'
- \kappa_4 \kappa_4''\kappa_6 
\bigr) \\[4pt]
&\hspace{10mm}
+ \kappa_1\kappa_4\bigl(
4 \kappa_1'  \kappa_4' \kappa_6
+ 2 \kappa_1'\kappa_4 \kappa_6' 
- 3 \kappa_1''\kappa_4 \kappa_6 
\bigr) 
+ 6 \kappa_1'^2\kappa_4^2 \kappa_6 
\Big\}\\
&-l''\kappa_1^2 \kappa_4\Big\{
\kappa_1\bigl(2 \kappa_4'\kappa_6  + \kappa_4 \kappa_6'\bigl)
+
3 \kappa_1'\kappa_4 \kappa_6 
\Big\}
+l'''\kappa_1^3 \kappa_4^2 \kappa_6.
\end{align*}
\end{theorem}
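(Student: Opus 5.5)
The plan is to reduce everything to the earlier theorem characterizing cylinder and cone type in terms of $a$, $\omega$ and $b_2$, using the data \eqref{eq:aB1} for $S_1$. One point needs care first. Those data are expressed in the reparametrized variable $\tilde t=\tau(t)$ with $d\tilde t/dt=|\cos\theta\,\kappa_4|$. Every derivative in the criterion (namely $a'$, $a''$, $\omega'$, $b_3'$) must therefore be taken with respect to $\tilde t$, i.e.\ $\frac{d}{d\tilde t}=\frac{1}{|\cos\theta\kappa_4|}\frac{d}{dt}$. Since $\kappa_4\ne0$ and $\cos\theta\ne0$, we may fix a sign and drop the absolute value locally.

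\emph{Cylinder type.} Here $a=\tan\theta$, so $da/dt=\theta'/\cos^2\theta=\kappa_6/\cos^2\theta$. Hence $da/d\tilde t=\kappa_6/(\cos^3\theta\,|\kappa_4|)$, which vanishes identically if and only if $\kappa_6\equiv0$. By the earlier theorem, this gives exactly the cylinder-type assertion.

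\emph{Cone type.} We need $a'\ne0$, i.e.\ $\kappa_6\ne0$, together with
\[
a''\omega-a'(\omega'+a'b_2)\equiv0,\qquad \omega=ab_2+b_1-b_3' .
\]
The main work is simplifying $\omega$. From \eqref{eq:aB1},
\[
ab_2+b_1=\frac{l\kappa_4(\sin^2\theta+\cos^2\theta)}{\kappa_1\cos\theta\,|\cos\theta\kappa_4|}=\frac{l\kappa_4}{\kappa_1\cos\theta\,|\cos\theta\kappa_4|},
\]
and this is the point where $\theta$ starts to factor out. Rather than carrying the formula for $b_3$ verbatim, I would recompute it directly from $b_3=\gamma'\cdot X'/X'\cdot X'$ in the $\tilde t$ parameter, using $\hat\gamma'$ from \eqref{eq:s1'} and $\hat X_1'=\cos\theta(-\kappa_4\e_2)-\kappa_6\sin\theta\,\e_4+\dots$. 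This also serves as a check on the power of $\kappa_4\cos\theta$ appearing in \eqref{eq:aB1}. After that I substitute into the expression above, convert all $\tilde t$-derivatives into $t$-derivatives, and use $\theta'=\kappa_6$ to eliminate derivatives of $\theta$.

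The expectation is that every occurrence of $\theta$ assembles into a global nonzero factor, a power of $\cos\theta$ times a power of $|\kappa_4|$. This is plausible because the cone condition is geometric (it says $\sigma_2'\equiv0$) and hence independent of the choice of constant in $\theta$. As a cross-check, I would alternatively compute $\sigma_2$ directly via the lemma identifying $f(S_2(f))$ with the second striction curve. Using $\lambda_1$ and $\eta_1$ from \eqref{eq:l1eta1}, solve $\lambda_1=\eta_1\lambda_1=0$ for $(s,r)$ in terms of $t$, then require $\frac{d}{dt}S_1(t,s(t),r(t))\equiv0$. This route avoids $\theta$ entirely. After clearing denominators (powers of $\kappa_1$, $\kappa_4$ and $\kappa_6$), it should produce the polynomial $f_{1cone}$, which is linear in $l,l',l'',l'''$ and involves derivatives of $\kappa_1$ up to third order.

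The main obstacle is the bookkeeping in this last step: verifying that the remaining factor equals exactly $f_{1cone}$. I would organize it by collecting coefficients of $l,l',l'',l'''$ separately, since the condition is linear in $l$. I would then check each coefficient against the displayed braces, dividing out the common nonzero factor (for example $\kappa_1^k\kappa_4^m$) at the end.
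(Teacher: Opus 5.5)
Your proposal is correct and follows the paper's own route: you feed the data \eqref{eq:aB1} (with $a=\tan\theta$, $\theta'=\kappa_6$) into the general cylinder/cone criterion, and your second-singular-set cross-check also works, giving $\sigma_2'=(\kappa_6v+\rho')\e_4$ with $v=(l/\kappa_1)'/\kappa_4$, $\rho=(\kappa_4l/\kappa_1+v')/\kappa_6$, and one checks that $f_{1cone}=\kappa_1^4\kappa_4^3\kappa_6^2(\kappa_6v+\rho')$. Your cautions are justified: since $\hat X_1'=-\kappa_4\cos\theta\,\e_2$ exactly, $b_3=(l\kappa_1'-l'\kappa_1)/(\kappa_1^2\kappa_4\cos\theta)$ carries the first power of $\kappa_4\cos\theta$ (not the cube printed in \eqref{eq:aB1}), and your explicit requirement $\kappa_6\ne0$ is genuinely needed because $f_{1cone}\equiv0$ whenever $\kappa_6\equiv0$.
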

\begin{proof}
The assertion follows by $a'=
2\kappa_6/(1 + \cos2\theta)$ 
and computing $a''\omega-a'(\omega'+a'b_2)$
	using \eqref{eq:aB1}.
\end{proof}

\begin{theorem}\label{thm:sings1}
	We assume $\kappa_1\kappa_4\ne0$.
	For the pseudo-non-degenerate two-ruled frontal\/ $S_1(t,s,r)$,
	let\/ $p=(t,s,r)$ be a singular point of\/ $S_1$, that is,
	$\lambda_1=0$ holds.
	Assume that\/ $S_1$ is a front at\/ $p$.
	Then\/ $S_1$ at\/ $p$ is a cuspidal edge if and only if\/ 
	$\W_{1c}\ne0$, where
	\begin{align}\label{eq:ce1}
		\W_{1c}=&\eta_1\lambda_1\nonumber\\
		=&r \kappa_1^3 \kappa_4^2 \kappa_6
		+\cos\theta\Big\{-2 \kappa_4 l (\kappa_1')^2
		+ \kappa_1 \bigl( 2 \kappa_4 \kappa_1' l' + l ( - \kappa_1' \kappa_4' + \kappa_4 \kappa_1'' ) \bigr)\\
		&\hspace{10mm}
		- \kappa_1^2 \bigl( \kappa_4^3 l - \kappa_4' l' + \kappa_4 l'' \bigr)\Big\}
		+\sin\theta\Big\{\kappa_1 \kappa_4 
		\kappa_6
		\bigl( l \kappa_1' - \kappa_1 l' \bigr) \Big\}\nonumber
	\end{align}
	holds. 
	The germ\/ $S_1$ at\/ $p$ is a swallowtail if and only if\/
	$\W_{1c}=0$ and\/ $\W_{1s}\ne0$, where
	\begin{equation}\label{eq:sw1}
		\W_{1s}=\eta_1\W_{1c}=
		\kappa_1(\W_{1c})_t
		- \kappa_4 l 
		\bigl( \sin\theta (\W_{1c})_{r} + \cos\theta (\W_{1c})_{s} \bigr).
	\end{equation}
	The germ\/ $S_1$ at\/ $p$ is a cuspidal butterfly if and only if\/
	$\W_{1c}=\W_{1s}=0$, $\kappa_6\ne0$ and\/ $\W_{1b}\ne0$, where
	\begin{equation}\label{eq:bt1}
		\W_{1b}=\eta_1\W_{1s}=
		\kappa_1(\W_{1s})_t
		- \kappa_4 l 
		\bigl( \sin\theta (\W_{1s})_{r} + \cos\theta (\W_{1s})_{s} \bigr).
	\end{equation}
	The germ\/ $S_1$ at\/ $p$ is a cuspidal cross cap\/ $\times$ interval 
	if and only if\/ $\kappa_1=0$
	and\/ $\kappa_1'\ne0$.
\end{theorem}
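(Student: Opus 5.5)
The plan is to apply Theorem \ref{thm:singcond} and Theorem \ref{thm:singcond2}, or equivalently Lemma \ref{lem:ak} and Corollary \ref{cor:ccri}. The point is that the quantities involved are invariant up to non-vanishing factors. I will therefore compute them directly in the original coordinates $(t,s,r)$ with the original frame $X_1=\e_3$, $Y_1=\e_4$, rather than in the reparametrized constrictively adapted frame. The criteria in Lemma \ref{lem:ak} depend only on an identifier of singularities and a null vector field. Any nonzero multiple of $\lambda$, and any null vector field $\eta$ that agrees with the given one on $S(f)$ up to a nonzero factor, give the same conditions. So I take $\lambda_1$ and $\eta_1$ from \eqref{eq:l1eta1}, which are read off from \eqref{eq:s1'}: $\det((S_1)_t,\e_3,\e_4,\e_2)$ is a nonzero multiple of $\lambda_1/\kappa_1^2$, and $\eta_1$ kills $(S_1)_t+(\dots)\e_3+s\kappa_6\e_4$ when $\lambda_1=0$.

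For the front cases, I compute $\eta_1\lambda_1$ using $\eta_1=\partial_t-(l\kappa_4/\kappa_1-r\kappa_6)\partial_s-s\kappa_6\partial_r$. On $S(f)$ I substitute $s=(l'\kappa_1-l\kappa_1')/(\kappa_4\kappa_1^2)$. To match the stated form of $\W_{1c}$, with its $\cos\theta,\sin\theta$ terms, I rotate to the adapted coordinates $(\hat s,\hat r)$ associated with $\hat X_1,\hat Y_1$. Equivalently, I express $(s,r)$ via the rotation by $\theta$, with $\theta'=\kappa_6$, and multiply by suitable nonvanishing factors ($\kappa_1$, $\kappa_1^2$, $|\cos\theta\kappa_4|$). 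The null vector field then takes the form $\kappa_1\partial_t-\kappa_4 l(\sin\theta\,\partial_r+\cos\theta\,\partial_s)$ up to a factor, as in \eqref{eq:sw1}.

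The cuspidal edge condition is then $\W_{1c}\neq0$. For the swallowtail, the iterates $\W_{1s}=\eta_1\W_{1c}$ and $\W_{1b}=\eta_1\W_{1s}$ are exactly the higher derivatives in Lemma \ref{lem:ak}; since only values at $p$ under the vanishing of the lower ones matter, the nonzero rescalings do not affect the criteria. The remaining step is the rank conditions, $\rank d(\lambda,\eta\lambda)=2$ and $\rank d(\lambda,\eta\lambda,\eta\eta\lambda)=3$. For these I use Theorem \ref{thm:singcond}, through \eqref{eq:aB1}. In the swallowtail case, $\lambda_1$ depends on $s$ while $\W_{1c}$ contains $r\kappa_1^3\kappa_4^2\kappa_6$, so the rank condition is automatic when $\kappa_6\neq0$. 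When $\kappa_6=0$ (so $a'=0$) it reduces to case (A), where the $t$-derivative supplies the rank. For the butterfly I need $\kappa_6\neq0$, i.e.\ $a'\neq0$, matching Theorem \ref{thm:singcond}. I expect this bookkeeping of rank conditions and nonvanishing factors to be the main obstacle, together with the lengthy symbolic verification that $\eta_1\lambda_1$ equals \eqref{eq:ce1} after substituting $s$.

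For the cuspidal cross cap $\times$ interval, Theorem \ref{thm:singcond2} gives the condition $\delta=0$, $\delta'\neq0$. By \eqref{eq:aB1}, $\delta=\kappa_1/|\cos\theta\kappa_4|$, and the factor $|\cos\theta\kappa_4|$ is nonvanishing, so $\delta=0$ iff $\kappa_1=0$. When $\kappa_1=0$, $\delta'$ is a nonzero multiple of $\kappa_1'$, also accounting for $dt/d\tilde t\neq0$. This branch implicitly requires relaxing $\kappa_1\neq0$ at the point, as the remark before the theorem indicates. I would handle this by using $\lambda_1$, which is smooth, rather than the expression for $\hat\gamma$.
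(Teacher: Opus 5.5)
Your treatment of the front clauses is the paper's own argument: its proof just inserts $\lambda_1,\eta_1$ from \eqref{eq:l1eta1} into Lemma~\ref{lem:ak}. It is sound. The stated $\W_{1c}$ is indeed $-\kappa_1\kappa_4\cos\theta\,\eta_1\lambda_1$ restricted to $S(S_1)$ and rewritten via $r=\hat r/\cos\theta-s\tan\theta$, so the $r$ in $\W_{1c}$ is the adapted coordinate $\hat r$. Your rank-condition discussion fills in what the paper leaves implicit. For the swallowtail it is automatic, since $d\lambda\neq0$ and $d\lambda(\eta)=0\neq d(\eta\lambda)(\eta)$. For the butterfly you correctly need $\kappa_6\neq0$, i.e.\ $a'\neq0$. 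One slip: the unit normal of $S_1$ is $\e_1$, not $\e_2$; as written, $\det((S_1)_t,\e_3,\e_4,\e_2)\equiv0$.

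The cuspidal cross cap clause is where your plan has genuine gaps, shared with the paper.

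\textbf{The clause as stated.} Under $\kappa_1\kappa_4\neq0$ and the front assumption, $\delta\neq0$, so both sides of that equivalence are false and the clause is vacuous. To give it content you must allow $\kappa_1(t_0)=0$. This requires $m=l/\kappa_1$ to extend smoothly, since otherwise $S_1$ is not defined there. The remark you cite concerns $\kappa_4$, not $\kappa_1$.

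\textbf{The identifier.} Your remedy of working with $\lambda_1$ then fails. We have $\lambda_1=\kappa_1^2(m'-s\kappa_4)$, which vanishes identically on $\{t=t_0\}$ with $d\lambda_1=0$ there, so it is not an identifier of singularities. The correct one is $\mu=m'-s\kappa_4=\lambda_1/\kappa_1^2$, which is what $\pm\det((S_1)_t,\e_3,\e_4,\e_1)$ gives.

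\textbf{The transversality hypothesis.} Theorem~\ref{thm:singcond2}, which you invoke, silently drops the hypothesis $\eta\lambda(p)\neq0$ of Corollary~\ref{cor:ccri}. That hypothesis is necessary, because the null direction of a cuspidal cross cap $\times$ interval is transverse to its singular set. With the paper's $\xi_1,\xi_2$, a direct computation gives $\bar\psi=\pm\delta\,(ra'-\omega)=\pm\delta\,\eta\lambda$, not $\delta$. For $S_1$ itself, with $\nu=\e_1$ and $\eta_1\nu=\kappa_1\e_2$, one gets $\bar\psi=\pm\kappa_1\,\eta_1\mu/\kappa_4$.

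Hence $\kappa_1=0$, $\kappa_1'\neq0$ is necessary but not sufficient. You must add $\eta_1\mu\neq0$ at $p$, i.e.\ $m''-s\kappa_4'+\kappa_4(m\kappa_4-r\kappa_6)\neq0$. When $\kappa_6(t_0)\neq0$ this expression is affine in $r$ with nonzero slope on the singular line over $t_0$. So it fails at exactly one singular point there, and at that point the germ is not a cuspidal cross cap $\times$ interval.
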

\begin{proof}
	Since the identifier of singularities, a null vector field
	and the function $\psi$ defined in \eqref{eq:defpsi1}
	can be taken as in \eqref{eq:l1eta1},
	we have the assertion.
\end{proof}
Since the second singular set coincides with the striction curve
of the striction surface,
$$
\big(\W_{1s}|_{\lambda=0,\,\W_{1c}=0}\big)(t)
$$
agrees with $f_{1cone}(t)$ up to a non-zero functional multiple.

\subsection{Properties of the frontal $S_2$}
We assume $(\kappa_1,\kappa_4)\ne(0,0)$, 
$(\kappa_4,\kappa_1\kappa_4')\ne(0,0)$.
Since the arguments for the case of $S_i$ $(i=2,3,4)$ are the same,
we just give the fundamental data $a,\delta,B$ and $\lambda,\eta$.
The partial differential with respect to $S_2$ is given by
\begin{align}
	(S_2)_t=&\e_1\Big(l+\frac{r \kappa_1 (\kappa_1 \kappa_4'-\kappa_4 \kappa_1')}{A_2^3}\Big)
	-\e_3\frac{s A_2^3 \kappa_6 +r\kappa_4(\kappa_1  \kappa_4'-\kappa_4 \kappa_1')}{A_2^3}
	+\e_4\frac{r \kappa_1 \kappa_6}{A_2},\nonumber\\
	(S_2)_s=&X_2=\e_4,\label{eq:s2'}\\
	(S_2)_r=&Y_2=\frac{\kappa_4\e_1 + \kappa_1\e_3}{A_2},\nonumber
\end{align}
where $A_2=(\kappa_1^2 + \kappa_4^2)^{1/2}$.
We see
\begin{align}
	\label{eq:l1}
	\lambda_2=&A_2(l\kappa_1
	+ s\kappa_4\kappa_6)
	+ r(-\kappa_4\kappa_1'
	+ \kappa_1\kappa_4'),\\
	\eta_2=&
	\partial_t
	-\frac{r\kappa_1\kappa_6}{A_2}\partial_s
	+\frac{
		sA_2^3\kappa_6
		+ r\kappa_4(-\kappa_4\kappa_1'+r\kappa_1\kappa_4')
	}{
		\kappa_1A_2^2}\partial_r
\end{align}
can be taken as an identifier of singularities and 
a null vector field of $S_2$.
Let $\theta$ be a function such that 
$\theta'=-\kappa_1\kappa_6/A_2$.
Setting
\begin{align*}
	\hat{X_2}=\cos\theta X_2-\sin\theta Y_2,
	\quad\quad\quad
	\hat{Y_2}=\sin\theta X_2+\cos\theta Y_2,
\end{align*}
we see $\hat{X_2}$ and $\hat{Y_2}$ are constrictively adapted
and 
$$
	|(\hat{X_2})'|^2=\dfrac{\Big(\kappa_4\kappa_6A_2\cos\theta
		+ (\kappa_4\kappa_1'- \kappa_1\kappa_4')\sin\theta\Big)^2}{(A_2)^{\frac{5}{2}}}.
	$$
We choose $\theta$ satisfying $|(\hat{X_2})'|\ne0$.
The functions $a,\delta$ and $B=(b_1,\ldots,b_4)$ defined in \eqref{eq:nondegp}
are given by
\begin{align}\nonumber
		a&=\frac{\kappa_4\kappa_6A_2\sin\theta+(\kappa_1\kappa_4'-\kappa_1'\kappa_4)\cos\theta}{\kappa_4\kappa_6A_2\cos\theta+(\kappa_1'\kappa_4-\kappa_1\kappa_4')\sin\theta},\\
		\delta&=
		\frac{A_2}{|(\hat{X_2})'|}\label{eq:aB2}\\
		B&=
		\left(
		-\dfrac{l\kappa_4\sin\theta}{A_2|(\hat{X_2})'|},\ 
		\dfrac{l\kappa_4\cos\theta}{A_2|(\hat{X_2})'|},\ 
		\dfrac{l\kappa_1A_2}{(\kappa_4\kappa_6A_2\cos\theta+(\kappa_1'\kappa_4-\kappa_1\kappa_4')\sin\theta)|(\hat{X_2})'|^2},\ 
		0
		\right).\nonumber
\end{align}
From these data, we can derive the striction surface and striction
curve of $S_2$, as well as
and the conditions for singularities by 
\eqref{eq:str1}, \eqref{eq:str2} and Theorem \ref{thm:singcond},
respectively.

Since the computations are similar and 
the explicit conditions themselves are 
not particularly illuminating, we omit the details.
\subsection{Properties of the frontal $S_3$}
We assume $(\kappa_4,\kappa_6)\ne(0,0)$, 
$(\kappa_4,\kappa_4'\kappa_6)\ne(0,0)$.
We have
\begin{align}
	(S_3)_t=&\e_1\Big(l-\frac{r\kappa_1\kappa_6}{A_3}\Big)
	+\e_2\Big(s\kappa_1+\frac{r\kappa_4(\kappa_4  \kappa_6'-\kappa_4'\kappa_6)}{A_3^3}\Big)+\e_4\frac{r \kappa_6(\kappa_4\kappa_6'-\kappa_4'\kappa_6)}{A_3^3},
	\nonumber\\
	(S_3)_s=&X_3=\e_1,\label{eq:s3'}\\
	(S_3)_r=&Y_3=\frac{\kappa_6\e_2 + \kappa_4\e_4}{A_3},\nonumber
\end{align}
where $A_3=\sqrt{\kappa_4^2+\kappa_6^2}$.
We see
\begin{align}\label{eq:l3}
	\lambda_3&=s\kappa_1\kappa_4A_3+ r(-\kappa_6\kappa_4'+ \kappa_4\kappa_6'),\\
	\eta_3&=
	\partial_t+
	\left(-l + \frac{r\kappa_1\kappa_6}{A_3}\right)\partial_s
	+
	\frac{
		- s\kappa_1A_3^3
		+ r\kappa_4\bigl(\kappa_6\kappa_4' - \kappa_4\kappa_6'\bigr)
	}{
		\kappa_6A_3^2
	}
	\partial_r
\end{align}
can be taken as an identifier of singularities and 
a null vector field of $S_1$.
Since the director curves are not constrictively adapted,
we modify them so that they become constrictively adapted.
Let $\theta$ be a function such that 
$\theta'=\kappa_1\kappa_6/A_3$.
Setting
\begin{align*}
	\hat{X_3}=\cos\theta X_3-\sin\theta Y_3,
	\quad\quad\quad
	\hat{Y_3}=\sin\theta X_3+\cos\theta Y_3,
\end{align*}
$\hat{X_3}$ and $\hat{Y_3}$ are constrictively adapted,
and
$$
	|(\hat{X_3})'|^2=
	\dfrac{\Big(\cos\theta\kappa_1\kappa_4A_3
		+ \sin\theta\bigl(\kappa_6\kappa_4' - \kappa_4\kappa_6'\bigr)\Big)^2}{(A_3)^4}.
$$
We choose $\theta$ satisfying $|(\hat{X_3})'|\ne0$.
The functions $a,\delta$ and $B=(b_1,\ldots,b_4)$ defined in \eqref{eq:nondegp}
are given by
\begin{align}\nonumber
		a&=\frac{\kappa_1\kappa_4A_3\sin\theta+(\kappa_4\kappa_6'-\kappa_4'\kappa_6)\cos\theta}{\kappa_1\kappa_4A_3\cos\theta+(\kappa_4'\kappa_6-\kappa_4\kappa_6')\sin\theta},\\
		\delta&=
		\frac{A_3}{|(\hat{X_3})'|}\label{eq:aB3}\\
		B&=
		\left(
		\dfrac{l\cos\theta}{|(\hat{X_3})'|},\ 
		\dfrac{l\sin\theta}{|(\hat{X_3})'|},\ 
		0,\ 
		0\right).\nonumber
\end{align}
From these data, we can derive the striction surface and striction 
curve of $S_3$,
as well as the conditions for singularities by 
\eqref{eq:str1}, \eqref{eq:str2} and Theorem \ref{thm:singcond},
respectively.
We omit the details.

\subsection{Properties of the frontal $S_4$}
We assume $\kappa_4\kappa_6\ne0$.
We have
\begin{align}\label{eq:s4'}
	(S_4)_t=\big(l-r\kappa_1\big)\e_1+s\kappa_1\e_2+r\kappa_4\e_3,\quad
	(S_4)_s=\e_1,\quad
	(S_4)_r=\e_2.
\end{align}
We see
\begin{equation}\label{eq:31eta3}
	\lambda_4=r\kappa_4,\quad
	\eta_4=\partial_t+(-l+r \kappa_1)\partial_s-s \kappa_1\partial_r
\end{equation}
can be taken as an identifier of singularities and 
a null vector field of $S_4$.
Since the director curves are not constrictively adapted,
we modify them so that they become constrictively adapted.
Let $\theta$ be a function such that $\theta'=\kappa_1$. 
Setting
\begin{align*}
	\hat{X_4}=\cos\theta X_4-\sin\theta Y_4,
	\quad\quad\quad
	\hat{Y_4}=\sin\theta X_4+\cos\theta Y_4,
\end{align*}
$\hat{X_4}$ and $\hat{Y_4}$ are constrictively adapted,
and
$|(\hat{X_4})'|^2=\kappa_4^2\sin^2\theta$.
We choose $\theta$ satisfying $\sin\theta\ne0$.
The functions $a,\delta$ and $B=(b_1,\ldots,b_4)$ defined in \eqref{eq:nondegp}
are given by
\begin{align}\label{eq:aB4}
	a=-\frac{\cos\theta}{\sin\theta},\ 
	\delta=\dfrac{\kappa_6}{|\kappa_4 \sin\theta|},\ 
	B=
	\left(\dfrac{l\cos\theta}{|\kappa_4 \sin\theta|},\ 
	\dfrac{l\sin\theta}{|\kappa_4 \sin\theta|},\ 
	0,\ 
	0
	\right).
\end{align}
From these data, we can derive striction surface and curve of $S_4$,
and the conditions for singularities by 
\eqref{eq:str1}, \eqref{eq:str2} and Theorem \ref{thm:singcond},
respectively.
We omit the detail.

\medskip
{\footnotesize
\begin{flushright}
\begin{tabular}{l}
Department of Mathematics,\\
Graduate School of Science, \\
Kobe University, \\
Rokkodai 1-1, Nada, Kobe \\
657-8501, Japan\\
E-mail: {\tt wayne990203@outlook.com}\\
E-mail: {\tt saji@math.kobe-u.ac.jp}
\end{tabular}
\end{flushright}}

\end{document}